\newcommand{\urltilde}{\kern -.15em\lower .7ex\hbox{~}\kern .04em}  
\newtheorem{thm}{Theorem}[subsection]
\newtheorem{prop}[thm]{Proposition}
\newcommand{\thmref}[1]{Theorem~\ref{#1}}
\newcommand{\propref}[1]{Proposition~\ref{#1}}
\newcommand{\eqnref}[1]{~(\ref{#1})}
\newcommand{\Inoindex}{\mathop{\mathrm{I}}\nolimits}
\newcommand{\tr}{\mathop{\mathrm{tr}}\nolimits}
\newcommand{\Aut}{\mathop{\mathrm{Aut}}\nolimits}
\newcommand{\g}{\mathop{\mathrm{g}}\nolimits}
\newif\if@borderstar
   \def\bordermatrix{\@ifnextchar*{%
       \@borderstartrue\@bordermatrix@i}{\@borderstarfalse\@bordermatrix@i*}%
   }
   \def\@bordermatrix@i*{\@ifnextchar[{\@bordermatrix@ii}{\@bordermatrix@ii[()]}}
   \def\@bordermatrix@ii[#1]#2{%
   \begingroup
     \m@th\@tempdima8.75\p@\setbox\z@\vbox{%
       \def\cr{\crcr\noalign{\kern 2\p@\global\let\cr\endline }}%
       \ialign {$##$\hfil\kern 2\p@\kern\@tempdima & \thinspace %
       \hfil $##$\hfil && \quad\hfil $##$\hfil\crcr\omit\strut %
       \hfil\crcr\noalign{\kern -\baselineskip}#2\crcr\omit %
       \strut\cr}}%
     \setbox\tw@\vbox{\unvcopy\z@\global\setbox\@ne\lastbox}%
     \setbox\tw@\hbox{\unhbox\@ne\unskip\global\setbox\@ne\lastbox}%
     \setbox\tw@\hbox{%
       $\kern\wd\@ne\kern -\@tempdima\left\@firstoftwo#1%
         \if@borderstar\kern2pt\else\kern -\wd\@ne\fi%
       \global\setbox\@ne\vbox{\box\@ne\if@borderstar\else\kern 2\p@\fi}%
       \vcenter{\if@borderstar\else\kern -\ht\@ne\fi%
         \unvbox\z@\kern-\if@borderstar2\fi\baselineskip}%
         \if@borderstar\kern-2\@tempdima\kern2\p@\else\,\fi\right\@secondoftwo#1 $%
     }\null \;\vbox{\kern\ht\@ne\box\tw@}%
   \endgroup
   }
\newtheorem{theorem}{Theorem}[section] 
\newtheorem{lemma}[theorem]{Lemma}
\newtheorem{example}[theorem]{Example}
\newtheorem{remark}[theorem]{Remark}
\newtheorem{corollary}[theorem]{Corollary}
\renewcommand{\a}{\alpha}
\def\g{\gamma}
\def\e{\epsilon}
\def\C{\mathbb{C}}
\def\Z{\mathbb{Z}}
\def\N{\mathbb{N}}
\def\G{\mathbb{G}}
\def\U{\mathbb{U}}
\def\V{\mathbb{V}}
\newcommand\Dic{\mathrm{Dic}}
\begin{document}

\title[The center of hyperelliptic Krichever-Novikov algebras]{On the module structure of the center of hyperelliptic Krichever-Novikov algebras II} 
\author{Ben Cox}

\address{Department of Mathematics,  College of
Charleston  \\
Charleston, SC 29424 USA
}
\email{coxbl@cofc.edu}

\author{Xiangqian Guo}
\address{Department of Mathematics,  Zhengzhou
University \\
 Zhengzhou 450001, Henan, P. R. China
 }
\email{guoxq@zzu.edu.cn}

\author{Mee Seong Im}
\address{Department of Mathematical Sciences,  United States Military Academy \\
 West Point, NY 10996 USA 
}
\email{meeseongim@gmail.com}

\author{Kaiming Zhao}
\address{Department of Mathematics, Wilfrid
Laurier University \\  Waterloo, ON, Canada N2L 3C5 \\ 
and College of
Mathematics and Information Science, Hebei Normal (Teachers)  
University, Shijiazhuang, Hebei, 050016 P. R. China 
}
\email{kzhao@wlu.ca}

\maketitle

\begin{abstract} 
Let $R := R_{2}(p)=\mathbb{C}[t^{\pm 1}, u :  u^2 = t(t-\alpha_1)\cdots (t-\alpha_{2n})] $ be the coordinate ring of a nonsingular hyperelliptic curve and let $\mathfrak{g}\otimes R$ be the corresponding current Lie algebra. \color{black} Here $\mathfrak g$ is a finite dimensional simple Lie algebra defined over $\mathbb C$ and  
\begin{equation*}
p(t)= t(t-\alpha_1)\cdots (t-\alpha_{2n})=\sum_{k=1}^{2n+1}a_kt^k.
\end{equation*}
In earlier work, Cox and Im gave a generator and relations description of the universal central extension of $\mathfrak{g}\otimes R$ in terms of certain families of polynomials $P_{k,i}$ and $Q_{k,i}$ and they described how the center $\Omega_R/dR$ of this universal central extension decomposes into a direct sum
of irreducible representations when the automorphism group was the cyclic group $C_{2k}$ or the dihedral group $D_{2k}$. 
 We give examples of $2n$-tuples $(\alpha_1,\dots,\alpha_{2n})$, which are the  automorphism groups  
 $\mathbb G_n=\text{Dic}_{n}$,  $\mathbb U_n\cong D_n$ ($n$ odd), or $\mathbb U_n$ ($n$ even)  of the hyperelliptic curves 
 \begin{equation}
S=\mathbb{C}[t, u: u^2 = t(t-\alpha_1)\cdots (t-\alpha_{2n})] 
 \end{equation}
 given in \cite{MR3631928}. 
  In the work below, we describe this decomposition when the automorphism group is 
 $\mathbb U_n=D_n$, where $n$ is odd. 
\end{abstract}  

\maketitle 

\bibliographystyle{amsalpha} 

\setcounter{tocdepth}{0} 


 
\section{Introduction}\label{section:intro} 


In  \cite{MR3211093} amongst other results, the automorphism group of derivation ring $\text{Der}(R)$ for $R=\mathbb{C}[t,(t-a_1)^{-1},\ldots, (t-a_{n})^{-1}]$ was described and interestingly enough the five Kleinian groups $\Z_n$, $D_n$, $A_4$, $S_4$ and $A_5$  appear as automorphism groups of $R$ for particular choices of $a_1,\dots, a_n$. These five groups also naturally appear in the McKay correspondence, tying together the representation theory of finite subgroups $G$ of $SL_n(\mathbb{C})$ to the resolution of singularities of quotient orbifolds $\mathbb{C}^n/G$.  
  In this same paper, \cite{MR3211093}, the authors also described the universal central extension of this derivation Krichever-Novikov algebra.  One can in a straightforward manner show that the automorphism group acts on the center $\Omega_R/dR$ and one can naturally ask how the center decomposes into a direct sum of irreducible representations.   This is exactly what described in \cite{cox-center-genus-zero-KN-algebras}.

In previous work (see \cite{MR3845909})  of the first and third authors we described how the center $\Omega_R/dR$ of the universal central extension of the hyperelliptic Lie algebra $\mathfrak{g}\otimes R$ with $R=\mathbb C[t,t^{-1},u:  u^2=p(t)]$ decomposes as a direct sum of irreducible modules for the corresponding automorphism groups $C_n$ and $D_n$, ($n$ even).  Here $p(t)=t(t-\alpha_1)\cdots (t-\alpha_{2n})=\sum_{i=1}^{2n+1}a_it^i$, with $\alpha_i$ pairwise distinct complex numbers\color{black}.   From the work of Grothendieck on the relationship between a Riemann surface and its de Rham cohomology groups one has
\begin{equation}\label{omegaRmoddR}
\Omega_R/dR=\mathbb C \omega_0\oplus \ldots \oplus \mathbb C\omega_{2n}
\end{equation}
where $\omega_0=\overline{t^{-1}\,dt}$ and $\omega_i=\overline{t^{-j}u\,dt}$ for $i=1,\dots, 2n$.
 
 In \cite{MR3845909} we described how $\Omega_R/dR$ decomposes as a sum of irreducible representations for the groups $C_{2k}$ and $D_{2k}$ when $k|n$.

%

Using this theorem one can give a description of the bracket of two basis elements in the universal central extension of $\mathfrak g\otimes R$ in terms of polynomials $P_{k,i}$ and $Q_{k,i}$ defined recursively
\begin{equation} \label{}
(2k+r+3)P_{k,i}=-\sum_{j= 1 }^{r} (3j+2k-2r)a_jP_{k-r+j-1,i} \quad \mbox{ for all } k\geq 0 
\end{equation}
 with the initial conditions
$P_{l,i}=\delta_{l,i}$, $-r \leq i,l\leq -1$ 
and 
\begin{equation} 
 ( 2m-3 )a_1Q_{m,i}  = \sum_{j=2}^{r+1 } ( 3j -2m)a_j Q_{ m - j+ 1 ,i}  \quad \mbox{ for all }  m>0    
\end{equation}
with the initial conditions $Q_{l,i}=\delta_{l,-i}$ for $1\leq l\leq r$ and $-r\leq i\leq -1$.  In this paper $\mathfrak g$ is assumed throughout to be a finite dimensional simple Lie algebra defined over the complex numbers. 
The generating series for these polynomials can be written in terms of hyperelliptic integrals using Bell polynomials and Fa\'a de Bruno's formula (see \cite{Arbogast}, \cite{MR1502817}, \cite{dB1} and \cite{dB2}).   One can compare this result to that given in \cite{MR3487217} and also in \cite{coxzhao-2016}.

We also describe in this paper (see \thmref{theorem-result-001}) how K\"ahler differentials modulo exact forms $\Omega_R/dR$ decompose under the action of the automorphism group of the coordinate ring 
$R := R_{2}(p)=\mathbb{C}[t^{\pm 1}, u : u^2 = p(t)]$ , where $p(t)=t(t-\alpha_1)\cdots (t-\alpha_{2n})=\sum_{i=1}^{2n+1}a_it^i$, with the $\alpha_i$ being pairwise distinct roots.
We recall Theorem~\ref{cor:group-actions}, where we describe the automorphism group of the universal central extension of the hyperelliptic Lie algebra as a $\mathbb{Z}_2$-graded Lie algebra.  This corrects a small error in our previous work \cite{MR3631928}. 
 
 The possible automorphism groups for the hyperelliptic curve 
$$
R =\mathbb{C}[t^{\pm 1}, u : u^2 = t(t-\alpha_1)\cdots (t-\alpha_{2n})] 
$$
are the groups  $C_{2k}=\Z_{2k}$ or one of the groups
\begin{align*} 
D_{2k}&:=\langle x,y\, |\, x^{2k}=1, y^{2}=1, xyx=y\rangle,\\ 
Dic_k&:=\langle x,y\,|\, x^{2k}=1, y^2=x^k, xyx=y\rangle,\\ 
\mathbb U_k&:=\langle x,y\,|\,x^{2k}=1,y^{2}=1,xyx=yx^k \rangle.  \\ 
\end{align*}
See \thmref{cor:group-actions} below, \cite[Corollary 15]{MR3631928}, \cite{MR1223022} and \cite{MR2035219} for more detail.

The polynomials $P_{k,i}$ and $Q_{l,j}$, which are defined in Proposition $3.3$ in \cite{MR3845909}, give us a description of how the center decomposes under the group of automorphisms of $R$.   The automorphism group of $R$ has a  natural action on $\Omega_R/dR$ and thus it is natural to wonder how this representation decomposes into a direct sum of irreducible representations.    
When the automorphism group is $C_{2k}$, we can rewrite \eqnref{omegaRmoddR} as a direct sum of $1$-dimensional irreducible $C_{2k}$-representations.
 More precisely the center decomposes as: 
\begin{equation}
\Omega_R/dR\cong U_0\oplus \ldots \oplus U_{k-1}, 
 \end{equation}  
 where $\displaystyle{U_r=\bigoplus_{i\equiv r\mod k,1\leq i\leq 2n}\mathbb C\omega_i}$ for $r=1,\ldots, k-1$, a sum of one-dimensional irreducible representation of $C_{2k}$.  
The $U_r$ are irreducible representations with character $\chi_r(s)=\exp(2\pi\imath rs/2k)$ each occurring with multiplicity $l$ and  
 \[ 
 U_0=\mathbb C\omega_0\oplus \bigoplus_{i=1}^l \mathbb C\omega_{ki}. 
 \]

If the automorphism group is $D_{2k}$, with $c^{2n}=a_1$, $k|2n$, and $k$ is even, 
the center decomposes under the action of $D_{2k}$ as
\begin{equation} 
\Omega_R/dR \cong \mathbb{C} \omega_0 \oplus  \bigoplus_{i=3}^{4} U_i^{\frac{(1-(-1)^k)n}{2k} } \oplus \bigoplus_{h=1}^{k-1} V_{h}^{\oplus \frac{(1-(-1)^h)n}{k}}. 
\end{equation} 
where $U_i$, $i=1,2,3,4$, are the irreducible one dimensional representations for $D_{2k}$ with character $\rho_i$ and $V_h$ are the irreducible 2-dimensional representations for $D_{2k}$ with character $\chi_h$, $1\leq h\leq k-1$.   
Here $\mathbb C\omega_0$ and $U_1$ are the trivial representation.

On the other hand if $k$ is odd, then the center decomposes as 
$$
\Omega_R/dR \cong \mathbb{C}\omega_0 
\oplus \bigoplus_{i=3}^{4} U_i^{\oplus \Upsilon_i(\epsilon_i, \nu_i)} 
\oplus 
\bigoplus_{h=1}^{k-1} V_h^{\oplus \frac{(1-(-1)^h)n}{k}}, 
$$ 
where 
\begin{align*} 
\Upsilon_i(\epsilon_i,\nu_i) 
  &= \frac{(1-(-1)^k)n}{2k}(\delta_{i,3}+\delta_{i,4})+(-1)^i\frac{1-(-1)^n}{4} +\frac{1 }{2} (-1)^i  \displaystyle{\sum_{i=n+3}^{2n}}
c^{n+3-2i}
  P_{i-n-3,-i}.
\end{align*} 
This is all derived in \cite[Theorem 7.2]{MR3845909}.

We now turn to the case when the automorphism group is $D_{n}$, where $n$ is odd. 
For $n$ odd, the center decomposes as  
\begin{equation} 
\Omega_R/dR \cong \mathbb{C} \omega_0 \oplus U_1^{\Xi_1} 
\oplus U_2^{\Xi_2} \oplus \bigoplus_{j=1}^{\frac{n-1}{2}} V_{j},  
\end{equation}
where   
\begin{align*}
\Xi_1 = \frac{1}{2}- \frac{1}{2} \sum_{i=n+3}^{2n} c^{\frac{n+3-2i}{2}}P_{i-n-3,-i} 
\qquad 
\mbox{ and }  
\qquad 
\Xi_2 = \frac{3}{2}+ \frac{1}{2} \sum_{i=n+3}^{2n} c^{\frac{n+3-2i}{2}}P_{i-n-3,-i}, 
\end{align*}
$\mathbb{C} \omega_0$ 
is a $1$-dimensional irreducible representation, $U_i$ are pairwise distinct $1$-dimensional irreducible representations, 
and 
$V_j$ are pairwise distinct $2$-dimensional irreducible representations (see \thmref{theorem-result-001} below).  
To prove this result below, we use the representation theory techniques of Frobenius and Schur as explained by Serre in \cite{MR0450380} and by Fulton and Harris in \cite{MR1153249} to prove our results.

Since $\text{Dic}_k$ for $k$ gives the same character table as that for $D_{2k}$ in our paper the decomposition theorem should nearly look the same with the exception of that the irreducible representations that appear are different and are for different groups. In other words, the multiplicities of the irreducibles are the same but the representations are different.  We will explore this in our future work. 
 
 \color{black}

\section{Automorphism group for $R=\mathbb C[t^{\pm 1},u : u^2=p(t)=t(t-\alpha_1)\cdots (t-\alpha_{2n})]$.}\label{section:automorphism}
 
In this section, we restrict to the case of $r=2n$ which allows us to use the results in  \cite{MR3631928}, \cite{MR1223022} and \cite{MR2035219} on automorphism groups of such algebras. 
 \subsection{Automorphisms of the algebra $R_2(p)$} 
Let $S_{2n}$ be the symmetry group on the finite set $\{1,2,\ldots, 2n \}$. 

First we recall some background material.   

\begin{theorem}[\cite{MR1223022} and \cite{MR2035219}] \label{BGG}
The automorphism group of a hyperelliptic curve $A=\mathbb C[X,Y|Y^2=P(X)]$ is isomorphic to one of the following groups:
$$
D_n, \quad 
\mathbb Z_n, \quad 
\mathbb V_n,\quad  
\mathbb H_n,\quad  
\mathbb G_n, \quad 
\mathbb U_n, \quad 
GL_2(3),\quad 
W_2, \quad  W_3
$$
where
\begin{align*}
\mathbb V_n&:=\langle x,y\,|\,x^4,y^n,(xy)^2,(x^{-1}y)^2\rangle, \\
\mathbb H_n&:=\langle x,y\,|\,x^4,y^2x^2,(xy)^n\rangle ,\\
\mathbb G_n&:=\langle x,y\,|\,x^2y^n,y^{2n},x^{-1}yxy\rangle, \\
\mathbb U_n&:=\langle x,y\,|\,x^2,y^{2n},xyxy^{n+1} \rangle, \\
W_2&:=\langle x,y|x^4,y^3,yx^2y^{-1}x^2,(xy)^4\rangle, \\
W_3&:=\langle x,y|x^4,y^3,x^2(xy)^4,(xy)^8\rangle.
\end{align*}
\end{theorem}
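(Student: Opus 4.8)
The plan is to derive this classification from the classification of finite subgroups of $\mathrm{PGL}_2(\mathbb{C})$ together with an analysis of the central extensions by the hyperelliptic involution. First I would use that for a hyperelliptic curve $C$ with affine model $Y^2=P(X)$ and genus $g\geq 2$, the hyperelliptic involution $\tau\colon(X,Y)\mapsto(X,-Y)$ is the \emph{unique} involution whose quotient curve has genus $0$; hence $\tau$ is central in $\Aut(C)$, and the reduced group $\overline{G}:=\Aut(C)/\langle\tau\rangle$ acts faithfully on $C/\langle\tau\rangle\cong\mathbb{P}^1$. Thus $\overline{G}$ is a finite subgroup of $\mathrm{PGL}_2(\mathbb{C})$, so by the classical classification it is one of $\mathbb{Z}_n$, $D_n$, $A_4$, $S_4$, $A_5$.

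Second, I would bring in the branch divisor. The double cover $C\to\mathbb{P}^1$ is ramified exactly over the $2g+2$ Weierstrass points (the roots of $P$, together with $\infty$ when $\deg P$ is odd), and since $\Aut(C)$ normalizes $\langle\tau\rangle$, the group $\overline{G}$ permutes this set $B\subset\mathbb{P}^1$. Using that a nontrivial element of $\mathrm{PGL}_2(\mathbb{C})$ fixes at most two points, together with the orbit-length restrictions on $B$ imposed by each of the five types of $\overline{G}$, one pins down which $\overline{G}$ — and, in the cyclic and dihedral cases, which $n$ — can occur for a given genus; in particular the polyhedral groups $A_4,S_4,A_5$ occur only for finitely many small $g$.

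The crux, and the step I expect to be the real obstacle, is to classify for each admissible $\overline{G}$ the central extensions
\[
1\longrightarrow\langle\tau\rangle\cong\mathbb{Z}_2\longrightarrow\Aut(C)\longrightarrow\overline{G}\longrightarrow 1,
\]
and to decide which of them are actually realized by a hyperelliptic curve. For each $\overline{\varphi}\in\overline{G}$ one must determine whether some lift $\varphi\in\Aut(C)$ can be chosen with $\ord(\varphi)=\ord(\overline{\varphi})$; this is controlled by the action of $\overline{\varphi}$ on $B$ (equivalently, on the theta characteristic cutting out $C$, equivalently by how $\varphi$ rescales $Y$). Working this through yields the groups on the list: the cyclic family $\mathbb{Z}_n$; in the dihedral case the groups $D_n$, $\mathbb{V}_n$, $\mathbb{H}_n$, $\mathbb{G}_n$, $\mathbb{U}_n$, distinguished by the extension class and by the order of the lift of a reflection; and in the polyhedral cases the exceptional groups $GL_2(3)$, $W_2$, $W_3$. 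The delicate part is twofold: eliminating those a priori admissible extensions that never occur because of the $2g+2$-point constraint, and, conversely, exhibiting for each surviving group an explicit $P(X)$ whose curve has precisely that automorphism group. This is exactly the orbit-counting argument on $B\subset\mathbb{P}^1$ carried out in \cite{MR1223022} and \cite{MR2035219}, which I would follow to complete the proof.
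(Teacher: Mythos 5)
This theorem is not proved in the paper at all: it is quoted verbatim as background, with the proof delegated entirely to \cite{MR1223022} and \cite{MR2035219}. So there is no in-paper argument to compare yours against; the only meaningful comparison is with the strategy of those references, and your outline does match it. The skeleton you give --- centrality of the hyperelliptic involution $\tau$, faithfulness of $\overline{G}=\Aut(C)/\langle\tau\rangle$ on $\mathbb{P}^1$, the classification $\mathbb{Z}_n$, $D_n$, $A_4$, $S_4$, $A_5$ of finite subgroups of $\mathrm{PGL}_2(\mathbb{C})$, and then the analysis of the $\mathbb{Z}_2$-central extensions constrained by the action on the branch locus $B$ --- is exactly the architecture of the Bujalance--Gamboa--Gromadzki argument.

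That said, as a proof your submission has a genuine gap, and you have located it yourself: the entire content of the theorem is in the step you defer. The list $D_n$, $\mathbb{Z}_n$, $\mathbb{V}_n$, $\mathbb{H}_n$, $\mathbb{G}_n$, $\mathbb{U}_n$, $GL_2(3)$, $W_2$, $W_3$, with those specific presentations, only emerges from actually classifying which central extensions $1\to\mathbb{Z}_2\to G\to\overline{G}\to 1$ are realized, i.e.\ from computing, for each $\overline{G}$ and each signature of its action on $B$, whether the distinguished lifts of generators square to $1$ or to $\tau$ and verifying the resulting relations; none of that is carried out. Two further points would need attention even in a full write-up: (i) the centrality and uniqueness of $\tau$ require $g\geq 2$, so the low-genus cases must be handled separately or excluded; and (ii) the statement here concerns the affine coordinate ring $A=\mathbb{C}[X,Y\mid Y^2=P(X)]$, whose automorphism group is a priori only the subgroup of $\Aut$ of the smooth projective model preserving the points at infinity, so one must reconcile the algebra automorphisms with the curve automorphisms (this is the same issue the present paper addresses later by passing to the localization $R_2(p)$ and to Theorem~\ref{cor:group-actions}). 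As it stands your text is a correct reading plan for the cited sources rather than a proof.
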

In \cite{MR2035219} a description of the reduced automorphism group is described for a given polynomial $P(X)$.  In our paper  we do not work with the reduced automorphism group and our coordinate ring is the localization $\mathbb C[t,t^{-1},u : u^2=p(t)=t(t-\alpha_1)\cdots (t-\alpha_{2n})]$ of $A$.

For convenience, we need the following alternative description of the group $\U_n$:
\begin{equation}\label{U'}
\mathbb U_n=\langle x_1,y_1 : y_1^{2n}=1,x_1^{2}=y_1^n, y_1x_1y_1=x_1^{-1}\rangle.
\end{equation}
Indeed, setting $y=y_1$ and $x=x_1y_1$, we can get $y^{2n}=1$, $x^2=x_1y_1x_1y_1=1$ and 
$xyxy^{n+1}=x_1y^2_1x_1y_1^{n+2}=y_1^{-1}(y_1x_1y_1)^2y_1^{n+1}=y_1^{-1}x_1^{-2}y_1^{n+1}=1$,
which is just the generating relations of $\U_{n}$ in Theorem \ref{BGG}. 
On the other hand, if $x, y$ are the generators of $\U_n$ as described in Theorem \ref{BGG},
we set $y_1=y$ and $x_1=xy$, which gives $y_1^{2n}=1$, $x_1^2=xyxy=y^n=y_1^n$ and
$y_1x_1y_1=yxyy=xy^{n+1}=x_1y_1^n$, the generating relations in \eqref{U'}.
So the group $\U_n$ defined by \eqref{U'} coincides with the group $\U_n$ in Theorem \ref{BGG}.

The result below describes the   automorphisms of the algebra of the superelliptic curve $u^2=p(t)$, and corrects some errors that occur in \cite{MR3631928}, Corollary 15.  

 \begin{theorem}[Corollary 15, \cite{MR3631928}]\label{cor:group-actions}  
Let $p(t)=t(t-\alpha_1)\cdots (t-\alpha_{2n})$, where $\alpha_i$ are pairwise distinct nonzero  roots. 
The only possible automorphisms $\phi\in \Aut(R_2(p))$ of the algebra $R_2(p)$ are of the following types: 
 \begin{enumerate}
\item\label{item:cor-cyclic-thm} 
There exist some $4n$-th root of unity $\xi$ and $\gamma\in S_{2n}$ with  $\alpha_{\gamma(i)}=\xi^2\alpha_i$ such that
 \begin{equation}\label{eq:cyclic-Z2-case}
 \phi(t)=\xi^2t,\quad \phi(u)=\xi u. 
 \end{equation}
We denote this $\phi$ by $\phi_{\xi}$ which satisfies $(\phi_{\xi})^{4n}=\text{id}$.
Note that  $\g$  is uniquely determined by $\xi^2$.
\item\label{cor:group-action-dihedral-maps-thm} There exists $c\in\mathbb C$ and $\gamma\in S_{2n}$ with 
$\alpha_i \alpha_{\gamma(i)}=c^2$ such that  
\begin{equation}\label{psia}
\phi(t)=c^2t^{-1}, \,\,\,  \phi(u)=\e c^{n+1}t^{-n-1}u,
\end{equation} 
where $\e=\pm1$ if $\prod_{i=1}^{2n}\alpha_i=c^{2n}$ and $\e=\pm\imath$ if $\prod_{i=1}^{2n}\alpha_i=-c^{2n}$.
We denote  these $\phi$ by $\psi^{\pm}_{c}$ respectively which satisfy $(\psi^{\pm}_{c})^2=\text{id}$ if $\e=\pm1$; $(\psi_{c}^\pm)^2=\phi_{-1}$ if $\e=\pm\imath$; $\psi^{-}_{c}=\psi^{+}_{-c}$ if $n$ is even; and 
$\psi^{\pm}_{-c}=\psi^{\pm}_{c}$ if $n$ is odd.
Note that  $\g$ is uniquely determined by $c^2$.  
\end{enumerate} 
\end{theorem}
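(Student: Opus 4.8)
The plan is to analyze an arbitrary algebra automorphism $\phi\in\Aut(R)$, where $R:=R_2(p)=\mathbb{C}[t^{\pm1},u:u^2=p(t)]$, by first pinning down $\phi(t)$ using the unit group of $R$ and then pinning down $\phi(u)$ using the defining relation $u^2=p(t)$. It is routine to check that each $\phi_\xi$ and $\psi^{\pm}_c$ listed is genuinely an automorphism (its inverse has the same form), so the real content is the \emph{necessity} of these two shapes.

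\emph{Units of $R$.} Since $u^2-p(t)$ is monic of degree $2$ in $u$, the ring $R$ is free of rank $2$ over $\mathbb{C}[t^{\pm1}]$ with basis $\{1,u\}$, and it is a domain because $p$ has a simple root, so $u^2-p(t)$ is irreducible over $\mathbb{C}[t^{\pm1}]$. An element $a(t)+b(t)u$ is a unit of $R$ iff its norm $a(t)^2-b(t)^2p(t)$ over $\mathbb{C}[t^{\pm1}]$ is a unit there, i.e.\ a single monomial $\mu t^k$. The decisive point is parity: $p(t)=t(t-\alpha_1)\cdots(t-\alpha_{2n})$ has top and bottom Laurent degrees $2n+1$ and $1$, both odd, whereas $a^2$ and $b^2$ have even top and bottom degrees; hence when $b\neq 0$ the extreme terms of $a^2$ and of $b^2p$ cannot cancel, and $a^2-b^2p$ is never a single monomial. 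So $b=0$, then $a^2$ is a monomial, hence $a$ is a monomial, and $R^{\times}=\mathbb{C}^{\times}t^{\mathbb{Z}}$.

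\emph{Shape of $\phi$, and the two cases.} As $\phi(t)$ and $\phi^{-1}(t)$ are both units, $\phi(t)=\lambda t^m$ with $m=\pm1$, so $\phi(t)=\lambda t$ or $\phi(t)=\lambda t^{-1}$ for some $\lambda\in\mathbb{C}^{\times}$. Writing $\phi(u)=A(t)+B(t)u$ and comparing the two sides of $\phi(u)^2=\phi(p(t))=p(\phi(t))$ in the basis $\{1,u\}$ gives $2AB=0$ and $A^2+B^2p(t)=p(\phi(t))$; since $B=0$ would force $\phi(R)\subseteq\mathbb{C}[t^{\pm1}]$, contradicting surjectivity, we get $A=0$ and $B(t)^2p(t)=p(\phi(t))$. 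If $\phi(t)=\lambda t$, then $B(t)^2p(t)=p(\lambda t)=\lambda^{2n+1}t\prod_i(t-\alpha_i/\lambda)$, and comparing zeros with multiplicities --- each $\alpha_i$ is a zero of $B^2p$ of odd order, hence a simple zero of $p(\lambda t)$ --- shows multiplication by $\lambda$ permutes $\{\alpha_1,\dots,\alpha_{2n}\}$; consequently $\lambda^{2n}=\prod_i(\lambda\alpha_i)/\prod_i\alpha_i=1$, the two root-products cancel, and $B^2=\lambda^{2n+1}=\lambda$. Setting $\xi:=B$ gives $\xi^2=\lambda$, a $4n$-th root of unity, $\phi=\phi_\xi$, and $\gamma$ is the unique (the $\alpha_i$ being distinct) permutation with $\alpha_{\gamma(i)}=\xi^2\alpha_i$: this is type \eqref{item:cor-cyclic-thm}. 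If $\phi(t)=\lambda t^{-1}$, the parallel zero-count applied to $B(t)^2t^{2n+2}\prod_i(t-\alpha_i)=\lambda(\prod_i\alpha_i)\prod_i(t-\lambda/\alpha_i)$ yields an involution $\gamma\in S_{2n}$ with $\alpha_i\alpha_{\gamma(i)}=\lambda$; pairing the roots (with fixed points where $\alpha_i=\pm c$) gives $\prod_i\alpha_i=\pm c^{2n}$ for $\lambda=c^2$, and $B(t)^2=\lambda(\prod_i\alpha_i)t^{-2n-2}$, whence $B(t)=\epsilon\,c^{n+1}t^{-n-1}$ with $\epsilon=\pm1$ if $\prod_i\alpha_i=c^{2n}$ and $\epsilon=\pm\imath$ if $\prod_i\alpha_i=-c^{2n}$: this is type \eqref{cor:group-action-dihedral-maps-thm}. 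The remaining assertions --- $(\phi_\xi)^{4n}=\mathrm{id}$, $(\psi^{\pm}_c)^2=\mathrm{id}$ or $\phi_{-1}$ according as $\epsilon^2=1$ or $-1$, $\psi^{-}_c=\psi^{+}_{-c}$ for $n$ even, $\psi^{\pm}_{-c}=\psi^{\pm}_c$ for $n$ odd --- follow by substituting these formulas into one another and tracking powers of $\xi$, $c$, $\epsilon$.

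\emph{Main obstacle.} The one substantive step is the determination $R^{\times}=\mathbb{C}^{\times}t^{\mathbb{Z}}$ via the odd-degree parity of $p$, together with careful Laurent-polynomial bookkeeping (allowing poles at $t=0$) in the zero-multiplicity comparisons; after that everything reduces to direct computation. Relative to \cite{MR3631928}, the only subtlety is reading off the corrected normalization of $\phi(u)$ --- the exponent $n+1$ and the precise value of $\epsilon$ (a sign, or a primitive fourth root of unity) --- from the equation $B(t)^2=\lambda(\prod_i\alpha_i)t^{-2n-2}$ and the automatic relation $\prod_i\alpha_i=\pm c^{2n}$.
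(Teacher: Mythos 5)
Your proof is correct. Note, however, that this paper does not actually prove Theorem~\ref{cor:group-actions}: it restates Corollary~15 of \cite{MR3631928} (with corrected normalizations), so there is no internal argument to compare yours against --- what you have written is a self-contained derivation of the imported result. Your route is the standard one for such coordinate rings and all the key steps hold up: the norm $a^2-b^2p$ together with the odd top and bottom Laurent degrees of $p$ correctly forces $R^{\times}=\mathbb{C}^{\times}t^{\mathbb{Z}}$ (the case analysis on whether the extreme degree comes from $a^2$ or from $b^2p$ closes in every branch); the basis decomposition $\phi(u)=A+Bu$ with $2AB=0$ and surjectivity correctly yields $A=0$; and the zero-multiplicity comparison at the nonzero points $\alpha_i$ (where $B$ is regular, even though $B$ acquires the pole $t^{-n-1}$ at the origin in the second case) gives exactly the permutation conditions $\alpha_{\gamma(i)}=\xi^{2}\alpha_i$, respectively $\alpha_i\alpha_{\gamma(i)}=c^2$. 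You also correctly recover the data that this paper flags as the correction to \cite{MR3631928}: from $B^2=\lambda(\prod_i\alpha_i)t^{-2n-2}$ and $(\prod_i\alpha_i)^2=c^{4n}$ one gets $\epsilon=\pm1$ or $\pm\imath$ according to the sign of $\prod_i\alpha_i/c^{2n}$, and the listed identities $(\psi_c^{\pm})^2=\mathrm{id}$ or $\phi_{-1}$, $\psi_c^-=\psi_{-c}^+$ ($n$ even), $\psi_{-c}^{\pm}=\psi_c^{\pm}$ ($n$ odd) follow by the substitutions you indicate. The only cosmetic point: in the first case it is worth saying explicitly that once multiplication by $\lambda$ permutes the roots one has $p(\lambda t)=\lambda^{2n+1}p(t)=\lambda p(t)$ identically, which is what upgrades $B^2p=p(\lambda t)$ to the constancy $B^2=\lambda$; you use this but state it only implicitly.
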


For convenience we denote $\psi_{c}=\psi^{+}_{c}$ in Theorem \ref{cor:group-actions}(2), and let $\Aut_1(R_2(p))$ be the set of all automorphisms coming from \ref{cor:group-actions} (1),
which is a subgroup of $\Aut(R_2(p))$.
 
\begin{corollary}[\cite{MR3631928}, Corollary 16]\label{cor:automorphism-groups}
Let $p(t)=t(t-\alpha_1)\cdots (t-\alpha_{2n})$, with pairwise  distinct roots. 
 \begin{enumerate}
\item\label{item:MR3631928-1} There exists some $k\in\N$ with $k|2n$ such that $\Aut_1(R_2(p))$ is generated by an automorphism 
$\phi_{\xi}$ of order $2k$, where $\xi$ is any primitive root of unity of order $2k$.
\item\label{item:MR3631928-2} If $\psi_c$ does not exist in $\Aut(R_2(p))$ for any nonzero complex number $c$, then 
$$\Aut(R_2(p))=\Aut_1(R_2(p))=\langle \phi_{\xi}\rangle \simeq \Z_{2k},$$
where $k$ and $\xi$ are as in \eqref{item:MR3631928-1}.
\item\label{item:MR3631928-3}  If $\psi_c$ exists in $\Aut(R_2(p))$ for some complex number $c$ with $c^{4n}=\prod_{i=1}^{2n}\a_i$, then 
\begin{equation}\label{Aut}
\Aut(R_2(p))=\{\phi_\xi^i, \psi_c\phi_\xi^i :  i=0,1,\ldots,2k-1\},
\end{equation}
where $\phi_{\xi}\in\Aut_1(R_2(p))$ is an automorphism of order $2k$ as in \eqref{item:MR3631928-1}.
In particular, we have $|\Aut(R_2(p))|=4k$. Moreover we have:
\begin{equation}
\Aut(R_2(p)) =
\begin{cases}
D_k  & \mbox{ if } 2n/k\ \text{is\ even\ and}\ \prod_{i=1}^{2n}\a_i=c^{2n}, \\ 
\G_k &\mbox{ if } 2n/k\ \text{is\ even\ and}\ \prod_{i=1}^{2n}\a_i=-c^{2n}, \\ 
\mathbb U_k\mbox{ with }k \mbox{ even }  &\mbox{ if }   2n/k\ \text{is\ odd}. \\ 
\end{cases}  
\end{equation}
\end{enumerate}
\end{corollary}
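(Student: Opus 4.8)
The plan is to prove all three parts by working directly with the two families $\phi_\xi$ and $\psi^{\pm}_c$ of \thmref{cor:group-actions}, extracting a finite presentation of $\Aut(R_2(p))$ from the explicit formulas \eqref{eq:cyclic-Z2-case} and \eqref{psia}, and matching it against the standard presentations of $D_k$, $\G_k=\Dic_k$ and $\mathbb U_k$. For part (1): an automorphism of the first type in \thmref{cor:group-actions} is determined by its rotation factor $\lambda=\xi^2$, which must satisfy $\lambda\{\alpha_1,\dots,\alpha_{2n}\}=\{\alpha_1,\dots,\alpha_{2n}\}$; the set of such $\lambda$ is a subgroup of $\mathbb C^{\times}$, and it is finite because every such $\lambda$ equals $\alpha_j/\alpha_1$ for some $j$, hence it equals $\mu_k$ for some $k\ge 1$, and $\prod_i(\lambda\alpha_i)=\lambda^{2n}\prod_i\alpha_i$ forces $\lambda^{2n}=1$, so $k\mid 2n$. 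Because $\phi_\xi$ also scales $u$ by $\xi$, the map $\zeta\mapsto\phi_\zeta$ is a group isomorphism $\mu_{2k}\cong\Aut_1(R_2(p))$ with $\mu_{2k}=\{\zeta:\zeta^2\in\mu_k\}$, so $\Aut_1(R_2(p))=\langle\phi_\xi\rangle\cong\Z_{2k}$ for $\xi$ a primitive $2k$-th root of unity; this is part (1). Part (2) is then immediate: with no $\psi_c$, \thmref{cor:group-actions} leaves only automorphisms of the first type.

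For part (3), suppose some $\psi_c$ exists and put $\sigma:=\phi_\xi$, $\tau:=\psi_c$. Any second-type automorphism $\psi^{\pm}_{c'}$ satisfies $\tau^{-1}\psi^{\pm}_{c'}\in\Aut_1(R_2(p))$ (the composite acts on $t$ by a scalar), so $\Aut(R_2(p))=\Aut_1(R_2(p))\sqcup\tau\,\Aut_1(R_2(p))$, genuinely disjoint since first- and second-type automorphisms act differently on $t$ ($t\mapsto\xi^2t$ vs.\ $t\mapsto c^2t^{-1}$); thus $|\Aut(R_2(p))|=4k$ and $\Aut(R_2(p))=\langle\sigma,\tau\rangle$. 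Substituting \eqref{eq:cyclic-Z2-case}--\eqref{psia} yields three relations: (i) $\sigma^{2k}=\mathrm{id}$; (ii) $\tau^2$ fixes $t$ and sends $u\mapsto\epsilon^2u$, so $\tau^2=\mathrm{id}$ when $\epsilon=\pm1$ (that is, $\prod_i\alpha_i=c^{2n}$) and $\tau^2=\phi_{-1}=\sigma^{k}$ when $\epsilon=\pm\imath$ (that is, $\prod_i\alpha_i=-c^{2n}$); (iii) $\tau\sigma\tau^{-1}$ sends $t\mapsto\xi^{-2}t$ and $u\mapsto\xi^{-2n-1}u$, and since $\xi^{2n}=(\xi^k)^{2n/k}=(-1)^{2n/k}$ this equals $\sigma^{-1}$ when $2n/k$ is even and $\phi_{-1}\sigma^{-1}=\sigma^{k-1}$ when $2n/k$ is odd.

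Now assemble the cases. If $2n/k$ is even then $\sigma^{2k}=\mathrm{id}$ and $\tau\sigma\tau^{-1}=\sigma^{-1}$; recalling that $xyx=y$ is the same as $yxy^{-1}=x^{-1}$, relations (i)--(iii) with $\tau^2=\mathrm{id}$ are the defining relations of the dihedral group of order $4k$ and with $\tau^2=\sigma^k$ those of $\Dic_k$, so $\Aut(R_2(p))\cong D_k$ if $\prod_i\alpha_i=c^{2n}$ and $\Aut(R_2(p))\cong\G_k=\Dic_k$ if $\prod_i\alpha_i=-c^{2n}$; in each case the epimorphism from the abstract group sending the generators to $\sigma,\tau$ is an isomorphism because both groups have order $4k$. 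If $2n/k$ is odd then conjugation by $\tau$ sends $\sigma$ to $\sigma^{k-1}$ and must therefore be an automorphism of the (index-two, hence normal) subgroup $\langle\sigma\rangle\cong\Z_{2k}$, forcing $\gcd(k-1,2k)=1$ and hence $k$ even; with $k$ even, relations (i)--(iii) with $\tau^2=\mathrm{id}$ become the presentation $\langle x,y\mid x^{2k}=1,\ y^2=1,\ xyx=yx^{k}\rangle$ of $\mathbb U_k$ (take $x=\sigma$, $y=\tau$; note $xyx=yx^k$ is equivalent to $yxy^{-1}=x^{k-1}$ once $k$ is even), while with $\tau^2=\sigma^k$ they become the presentation \eqref{U'} of $\mathbb U_k$ (take $y_1=\sigma$, $x_1=\tau$). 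Hence $\Aut(R_2(p))\cong\mathbb U_k$ with $k$ even, completing part (3).

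The crux is this last identification, specifically showing that the ``$2n/k$ odd'' branch produces no fourth group: the parity obstruction $\gcd(k-1,2k)=1$ rules out $k$ odd, and then one must check that \emph{both} possible values of $\tau^2$ in that branch are accounted for by $\mathbb U_k$ --- one sub-case gives $\tau$ an involution and one uses $\langle x,y\mid x^{2k},y^2,xyx=yx^k\rangle$, the other gives $\tau$ of order $4$ and one uses \eqref{U'} --- which is exactly where the equivalence of the two presentations of $\mathbb U_k$ verified around \eqref{U'} is needed. The rest is routine bookkeeping: the coset decomposition and the order $4k$ in part (3), and the fact that the sign of $\prod_i\alpha_i/c^{2n}$ is forced by the data (from $\prod_i\alpha_i\,\alpha_{\gamma(i)}=(c^2)^{2n}$, with $\gamma$ the permutation attached to $\psi_c$, one gets $(\prod_i\alpha_i)^2=c^{4n}$), so that the three cases listed are mutually exclusive and exhaustive.
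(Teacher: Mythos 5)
Your proposal is correct and follows essentially the same route as the paper: decompose $\Aut(R_2(p))$ into $\Aut_1(R_2(p))$ and its $\psi_c$-coset, compute $\psi_c^2$ and the conjugation relation between $\phi_\xi$ and $\psi_c$, and match the resulting order-$4k$ presentation against $D_k$, $\Dic_k$, or $\mathbb U_k$ according to the parity of $2n/k$ and the sign of $\prod_i\alpha_i/c^{2n}$. The only cosmetic differences are in part (1), where you invoke cyclicity of finite subgroups of $\mathbb C^{\times}$ rather than the maximal-order argument in a finite abelian group, and in the $2n/k$-odd branch, where the evenness of $k$ also follows at once from $2n=k\cdot(2n/k)$ with $2n$ even, without the $\gcd(k-1,2k)$ detour.
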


\begin{proof} 
For any $\phi_{\xi}, \phi_{\xi'}\in \Aut(R_2(p)), \xi, \xi'\in\C$, we have 
$\phi_{\xi}\phi_{\xi'}=\phi_{\xi\xi'}$ and hence $\Aut_1(R_2(p))$ is a finite abelian group. 
Let $\phi_\xi$ be an automorphism of the greatest order $2k$ with $k|2n$. 
Then any other automorphism $\phi_{\xi'}$ must have order $k'$ with $k'|k$. 
Hence $\xi'=\xi^{r}$ for some $r\in\N$ and $\phi_{\xi'}=\phi_\xi^r$. 
Now $\Aut_1(R_2(p))$ is generated by $\phi_\xi$. Hence \eqref{item:MR3631928-1} and \eqref{item:MR3631928-2} follow.

To prove \eqref{item:MR3631928-3}, we first note that $\psi_c\psi_{c'}\in\Aut_1(R_2(p))$ for any $\psi_c, \psi_{c'}\in\Aut(R_2(p))$, where $c,c'\in\C$. 
Let $\xi, \phi_\xi$ and $k$ be as in \eqref{item:MR3631928-1}, 
then $\xi$ is a primitive root of unity of order $2k$ and hence $\xi^k=-1$. Set $l=2n/k$.
Take any $\psi_c\in\Aut(R_2(p))$. Now we can deduce \eqref{Aut} easily.
Noticing that elements listed in  \eqref{Aut} are pairwise distinct, we see $|\Aut(R_2(p))|=4k$.

With  straightforward computations we deduce that 
$$(\phi_\xi\psi_c\phi_\xi)(t)=\psi_c(t)\ \text{and}\ (\phi_\xi\psi_c\phi_\xi)(u)=\xi^{-2n}\psi_c(u)=(-1)^{l}\psi_c(u).$$ 

In case $l$ is even, we have $\phi_\xi\psi_c\phi_\xi=\psi_c$.
If $\prod_{i=1}^{2n}\a_i=c^{2n}$, then $\psi_c^2=\mathrm{id}$ and hence we have an epimorphism from 
$D_{2k}$ to $\text{Aut}(R_2(p))$. Since $|D_{2k}|=4k$, we have $\text{Aut}(R_2(p))\cong D_{2k}$.
Similarly, if $\prod_{i=1}^{2n}\a_i=-c^{2n}$, then $\psi_c^2=\phi_{-1}=\phi_\xi^k$ and 
we have an epimorphism from $\G_{k}$ to $\text{Aut}(R_2(p))$. 
Moreover,  from the fact $|\G_{k}|\leq 4k$ we obtain that  $\text{Aut}(R_2(p))\cong \mathbb G_k=\text{Dic}_k$.

In case $l$ is odd, we have $\phi_\xi\psi_c\phi_\xi=\psi_{c}\phi_{-1}=\psi_{c}\phi_{\xi}^k$.
If $\prod_{i=1}^{2n}\a_i=c^{2n}$, then $\psi_c^2=\mathrm{id}$ and $\psi_c, \phi_\xi$ satisfy the generating relations for $\U_k$ in Theorem \ref{BGG}; if $\prod_{i=1}^{2n}\a_i=-c^{2n}$, then $\psi_c^2=\phi_{-1}=\phi_\xi^k$ and $\psi_c, \phi_\xi$ satisfy the generating relations for $\U_k$ in \eqref{U'}. 
We get an epimorphism from $\U_{k}$ ($k$ even) to $\text{Aut}(R_2(p))$, which is an isomorphism by a similar argument as in the previous paragraph.
\end{proof}

In the next example we will realize the groups $\Dic_n=\G_n$ and $ \U_n$.

\begin{example} 
 Let $p(t)=t\prod_{i=1}^l\prod_{j=1}^{k}(t-c_i\xi^{2j})$ with pairwise distinct roots, where $\xi$ is primitive root of unity of order $2k$ and $l=2n/k$. Set $\a_{(i-1)k+j}=c_i\xi^{2j}$. We have $\ \phi_{\xi}\in\Aut_1(R_2(p))$ which has order $2k$.
\begin{itemize}
\item[(1)] Suppose that $l=3$ and $|c_1|, |c_2|,|c_3|$ are pairwise distinct. We see that $\Aut_1(R_2(p))$ $=\langle \phi_{\xi}\rangle\cong \Z_{2k}$.
 But we can not find $c\in\C$ and $\gamma\in S_{2n}$
such that $\a_i\a_{\gamma(i)}=c^2$ (i.e., $|c|^2=|c_i|\cdot |c_{i'}|$ for all $i=1, 2,3$ where $i'\ne i$ is determined by $i$). So there does not exist automorphism of $R_2(p)$ coming from (2) of Lemma \ref{cor:group-actions}. Hence $\Aut(R_2(p))=\Aut_1(R_2(p))\cong \Z_{2k}$.
\item[(2)] Suppose that $l=2$ and $|c_2|\neq |c_1|$. Then $k=n$, $\xi^n=-1$ and $\prod_{i=1}^{2n}\a_i=c_1^{n}c_2^n$. Take any $c\in\C$ such that $c^2=c_1c_2$, we have $\prod_{i=1}^{2n}\a_i=c^{2n}$ and 
$$
c_1\xi^{j-i}c_2\xi^{j+i}=(\xi^j c)^2, \quad \prod_{i=1}^{2n}\a_i=(\xi^j c)^{2n}.
$$ 
We can define the automorphisms $\psi^{\pm}_{\xi^jc}$ for all $j=0,1,\ldots,2n-1$, that is,
$$\psi^{\pm}_{\xi^jc}:  t\mapsto \xi^{2j}c^2t^{-1}, 
\qquad
 \phi(u)=\pm\xi^{j(n+1)}c^{n+1}t^{-n-1}u.$$
Denote $\psi_c=\psi^+_{c}$. We can check that 
$\psi^{+}_{\xi^jc}=\psi_c\phi_\xi^{j}$ if $j$ is even and
$\psi^{-}_{\xi^jc}=\psi_c\phi_\xi^{j}$ if $j$ is odd.
Moreover,
$\psi^{-}_{\xi^jc}=\psi^{+}_{\xi^{n+j}c}$ if $n$ is even and 
$\psi^{\pm}_{\xi^jc}=\psi^{\pm}_{\xi^{n+j}c}$ if $n$ is odd. 
So these $\psi^{\pm}_{\xi^jc}$ give rise to exactly $2n$ distinct automorphisms. 
By Corollary \ref{cor:automorphism-groups}, we have
$$D_{n}\cong \Aut(R_2(p))=\{\phi_{\xi}^j, \psi_c\phi_\xi^j\ |\ j=0,1,\ldots,2n-1\}.$$
This provides a realization of the group $D_{n}$.
\item[(3)] Suppose that $l=1$, $k=2n$. Denote $c=\xi c_1$. Without loss of generality, we may assume 
$\xi^n=\imath$.
Noticing $\xi^{2n}=-1$, we have $\prod_{i=1}^{2n}\a_i=c^{2n}$ and
$$(c\xi^{j+i})(c\xi^{j-i})=(\xi^{j}c)^2,\qquad 
\prod_{i=1}^{2n}\a_i=
\begin{cases}
(\xi^jc)^{2n} &\ \text{if\ $j$\ is\ even},  \\ 
-(\xi^j c)^{2n} &\ \text{if\ $j$\ is\ odd}. 
\end{cases}
$$
We can define $\psi^{\pm}_{\xi^jc}$ for $j=0,1,\ldots,4n-1$. More explicitly, we have 
$$\begin{cases}
\psi^{\pm}_{\xi^jc}: t\mapsto \xi^{2j}c^2t^{-1}, \quad \phi(u)=\pm\xi^{j(n+1)}c^{n+1}t^{-n-1}u & \text{if\ $j$\ is\ even},\\\\
\psi^{\pm}_{\xi^jc}: t\mapsto \xi^{2j}c^2t^{-1}, \quad \phi(u)=\pm\imath\xi^{j(n+1)}c^{n+1}t^{-n-1}u & \text{if\ $j$\ is\ odd}.
\end{cases}$$
Denote $\psi_c=\psi^+_{c}$. We can check that 
$\psi^{+}_{\xi^jc}=\psi_c\phi_\xi^{j}$ if $j\equiv 0\mod 4$ or $j\equiv 3\mod 4$ and
$\psi^{-}_{\xi^jc}=\psi_c\phi_\xi^{j}$ if $j\equiv 2\mod 4$ or $j\equiv 1\mod 4$.
Moreover,
$\psi^{-}_{\xi^jc}=\psi^{+}_{\xi^{2n+j}c}$ if $n$ is even and 
$\psi^{\pm}_{\xi^jc}=\psi^{\pm}_{\xi^{2n+j}c}$ if $n$ is odd. 
So these $\psi^{\pm}_{\xi^jc}$ give rise to exactly $4n$ distinct automorphisms. 
Using Corollary \ref{cor:automorphism-groups}, we can write
$$\U_{n}\cong \Aut(R_2(p))=\{\phi_{\xi}^j, \psi_c\phi_\xi^j :  j=0,1,\ldots,4n-1\}.$$
This provides a realization of the group $\U_{n}$.
\end{itemize}
\end{example}

By the above example, we can deduce the following description of the groups $\Dic_n=\G_n, \U_n, \V_{2n}$.
\begin{corollary} Let $n$ be any positive integer. Then
\begin{enumerate}
\item\label{item:description1} $\mathrm{Dic}_n=\mathbb G_n=\langle x,y : y^{2n}=1, x^2=y^n, yxy=x\rangle=\{y^i, xy^i :  i=0,1,\ldots,2n-1\}$, 
\color{red}
\color{black}
\item\label{item:description2} If $n$ is odd, then $\U_n\cong D_n$. If $n$ is even, then $|\U_n|=4n$ and moreover
$$\mathbb U_n =\langle x,y :  y^{2n}=1,x^{2}=1,yxy=xy^n \rangle=\{y^i,xy^i :  i=0,1,\ldots,2n-1\}. $$
\end{enumerate}
\end{corollary}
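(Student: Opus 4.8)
The plan is to read both assertions off the defining presentations in Theorem~\ref{BGG}: in each case I would simplify the relations using $y^{2n}=1$, extract a normal form for the group elements, and then pin down the order, the lower bound on the order being supplied by explicit models (the classical dicyclic group, a semidirect product, or the automorphism groups produced in the preceding example together with Corollary~\ref{cor:automorphism-groups}).

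For part~\eqref{item:description1} I would begin with $\mathbb G_n=\langle x,y\mid x^2y^n,\ y^{2n},\ x^{-1}yxy\rangle$. Since $y^{2n}=1$ gives $y^{-n}=y^n$, the relation $x^2y^n=1$ is equivalent to $x^2=y^n$, while $x^{-1}yxy=1$ is equivalent to $x^{-1}yx=y^{-1}$, hence to $yxy=x$. Thus $\mathbb G_n=\langle x,y\mid y^{2n}=1,\ x^2=y^n,\ yxy=x\rangle$, which is exactly the standard presentation of the dicyclic group $\Dic_n$ of order $4n$; in particular $\mathbb G_n=\Dic_n$. Rewriting $yxy=x$ as $xy^{\pm1}=y^{\mp1}x$ lets one move every power of $y$ past every $x$, so each element equals $y^ax^b$ for some integers $a,b$; then $x^2=y^n$ reduces $b$ to $\{0,1\}$ and $y^{2n}=1$ reduces $a$ to $\{0,1,\dots,2n-1\}$, giving $\mathbb G_n=\{\,y^i,\ y^ix:0\le i\le 2n-1\,\}$, and since $y^ix=xy^{-i}$ this set is $\{\,y^i,\ xy^i:0\le i\le 2n-1\,\}$. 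These $4n$ elements are pairwise distinct because $|\Dic_n|=4n$; equivalently, $\Dic_n$ is realized as $\Aut(R_2(p))$ of order $4n$ for a suitable curve by Corollary~\ref{cor:automorphism-groups}.

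For part~\eqref{item:description2} I would start from $\mathbb U_n=\langle x,y\mid x^2,\ y^{2n},\ xyxy^{n+1}\rangle$; using $y^{2n}=1$ the last relation reads $xyx=y^{-n-1}=y^{n-1}$. Conjugating $y$ twice by $x$ and using $x^2=1$ gives $y=x^2yx^2=x(xyx)x=xy^{n-1}x=(xyx)^{n-1}=y^{(n-1)^2}$, so $y^{(n-1)^2-1}=y^{\,n(n-2)}=1$; together with $y^{2n}=1$ this forces $y^d=1$ with $d=\gcd\!\big(n(n-2),2n\big)=n\gcd(n-2,2)$. If $n$ is odd then $d=n$, so $y^n=1$ is a consequence of the relations of $\mathbb U_n$, and adjoining it makes $y^{2n}$ trivial and turns $xyxy^{n+1}$ into $xyxy=(xy)^2$; hence $\mathbb U_n\cong\langle x,y\mid x^2,\ y^n,\ (xy)^2\rangle=D_n$. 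If $n$ is even then $xyx=y^{n-1}$ gives $xy^i=y^{\,i(n-1)}x$, so every element is $y^ax^b$ with $b\in\{0,1\}$ and $a\in\{0,\dots,2n-1\}$, whence $|\mathbb U_n|\le 4n$; moreover $\gcd(n-1,2n)=1$ for $n$ even, so $\{\,y^{\,i(n-1)}x\,\}=\{\,y^mx\,\}$ and the elements are exactly $\{\,y^i,\ xy^i:0\le i\le 2n-1\,\}$. For the reverse inequality I would note that $(n-1)^2\equiv1\pmod{2n}$ for $n$ even, so $y\mapsto y^{n-1}$ is an order-$2$ automorphism of $\Z_{2n}$ and the associated semidirect product $\Z_{2n}\rtimes\Z_2$ (of order $4n$) receives an epimorphism from $\mathbb U_n$; hence $|\mathbb U_n|=4n$ and the normal form is exact. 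Finally, granted $x^2=1$ and $y^{2n}=1$, the relation $xyxy^{n+1}=1$ is equivalent to $yxy=xy^n$, which is the presentation in the statement (one may also pass through the form~\eqref{U'}).

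The step I expect to be the main obstacle is the lower bounds $|\mathbb G_n|=4n$ and, for $n$ even, $|\mathbb U_n|=4n$ --- that is, ruling out any further collapse of the presentations; I would handle this through the concrete models above (the classical dicyclic group, respectively the semidirect product $\Z_{2n}\rtimes\Z_2$), which is also the content of ``by the above example''. A secondary nuisance is keeping the three equivalent presentations of $\mathbb U_n$ straight --- the one in Theorem~\ref{BGG}, the form~\eqref{U'}, and the one in the statement --- all linked by substitutions already recorded in the text. It is worth stressing that the isomorphism $\mathbb U_n\cong D_n$ for $n$ odd is exactly the degeneration caused by $y\mapsto y^{n-1}$ failing to be an automorphism of $\Z_{2n}$ when $n$ is odd, which forces $\langle y\rangle$ down to order $n$.
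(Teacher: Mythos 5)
Your argument is correct, and for the one substantive claim the paper actually proves in detail --- that $\U_n\cong D_n$ for $n$ odd --- you do essentially what the paper does: derive $y^n=1$ from the relations (the paper computes $y^nx=xy^{n(n-1)}=x$ using that $n(n-1)\equiv 0\pmod{2n}$ for $n$ odd; you get the same conclusion from $y=y^{(n-1)^2}$ and a gcd computation) and then read off the dihedral presentation. Where you genuinely diverge is in the lower bounds $|\G_n|=4n$ and $|\U_n|=4n$ ($n$ even): the paper disposes of part (1) and the second half of part (2) by citing Corollary~\ref{cor:automorphism-groups} and the preceding example, i.e.\ the realization of these groups as $\Aut(R_2(p))$ for explicit curves, whereas you supply self-contained abstract models (the classical dicyclic group of order $4n$, and the semidirect product $\Z_{2n}\rtimes\Z_2$ with $x$ acting by $y\mapsto y^{n-1}$, which is well defined since $(n-1)^2\equiv 1\pmod{2n}$ for $n$ even) receiving epimorphisms from the presented groups. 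Your route makes the corollary independent of the curve-theoretic example, at the cost of a little extra group theory; the paper's route is shorter given that the example has already been worked out. Both are sound, and your normal-form arguments giving the upper bound $4n$ and the element lists $\{y^i,xy^i\}$ are exactly what the paper leaves implicit.
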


\begin{proof} Assertion \eqref{item:description1} and the second statement of \eqref{item:description2} follow directly from Corollary \ref{cor:automorphism-groups} and the above example. 
To prove the first statement of \eqref{item:description2}, let $x, y$ be the generators of $\U_n$ as described in \eqref{item:description2}.
Then we have $yx=xy^{n-1}$, and hence $y^nx=xy^{n(n-1)}=x$ since $n$ is odd. 
We get $y^n=1$ and it follows that $\U_n\cong D_n$ if $n$ is odd.
%
%
\end{proof}

We add to this another 
 \begin{corollary}[\cite{MR3845909}, Corollary 6.4] \label{polycoefficients}
 Let $p(t)=t(t-\alpha_1)\cdots (t-\alpha_{2n})$, where $\alpha_i$ are distinct roots.   Two possible types of automorphisms $\phi\in \Aut(R_2(p))$ of the algebra $R_2(p)$ are the following: 
 \begin{enumerate}
 \item\label{item:cor-cyclic} If $\alpha_{\gamma(i)}=\zeta \alpha_i$ for some $2n$-th root of unity $\zeta$ and $\gamma\in S_{2n}$, then 
 \begin{equation}
 \phi(t)=\zeta t,\quad \phi(u)=\pm\xi u, 
 \end{equation}
 where we can take $\xi=\zeta^{1/2}=\exp(2\pi\imath/2k)$ with $\zeta$ having order $k$ and  $k|2n$. 
 It follows that $\phi$ has order $2k$. 
 In particular, after a change in indices
\begin{equation}\label{eqn:dihedral-setting-p-in-k-and-n}
 \begin{split}
 p(t) &=
t(t-\alpha_1)(t-\zeta\alpha_1)\cdots (t-\zeta^{k-1}\alpha_1) \cdots (t-\alpha_{2n/k})\cdots (t-\zeta^{k-1}\alpha_{2n/k})\\
&=t(t^k-\alpha_1^k)(t^k-\alpha_2^k)\cdots(t^k-\alpha_{2n/k}^k)    \\
&=\sum_{q=0}^{\frac{2n}{k}}(-1)^qe_q(\alpha_1^k,\dots, \alpha_{2n/k}^k)t^{2n-qk+1}, \\ 
\end{split}
\end{equation}
where $e_q(x_1,x_2,\dots, x_{2n/k})$ is the elementary symmetric polynomial of degree $q$ in $x_1,\dots ,x_{2n/k}$:
\begin{align*}
e_q(x_1,x_2,\dots, x_{2n/k})&=\sum_{1\leq j_1<j_2<\ldots <j_q\leq 2n/k}x_{j_1} x_{j_2} \cdots x_{j_q}.
\end{align*} 
In this case,  $\langle \phi_\xi^+\rangle\cong \Z_{2k}$. 
 
 \item\label{cor:group-action-dihedral-maps} 
 If, in addition to the above, 
 there exists $\beta\in S_{2n}$ such that $\alpha_i \alpha_{\beta(i)}=c^2$ for all $i$, then  $\phi_\xi^\pm(t)=\zeta t$ and $\phi_\xi^\pm(u)=\pm\xi u$, and 
 $\psi(t)=c^2t^{-1}$ and 
\begin{equation}\label{pma}
 \psi_c^\pm( u)=\pm t^{-n-1} c^{n+1}u \quad \text{ if }a_1=\prod_{i=1}^{2n}\alpha_i=c^{2n},\tag{a}
 \end{equation}
 or 
\begin{equation}\label{pmb}
 \psi_c^\pm(u)=\pm t^{-n-1}\imath c^{n+1}u\quad \text{ if }a_1=\prod_{i=1}^{2n}\alpha_i=-c^{2n}.\tag{b}
\end{equation} 
In this case
\begin{equation}\label{coefficientsymmetry}
 p(t)=\sum_{r=1}^{2n+1}a_rt^r,  \quad \mbox{ where } \quad  a_k=\pm c^{2n-2k+2}a_{2n+2-k} 
\end{equation}
for $k=1,\dots, 2n+1$.  Here the $\pm$  in \eqnref{coefficientsymmetry} corresponds to the $\pm$ in $a_1=\pm c^{2n}$. 
 \end{enumerate}
 \end{corollary}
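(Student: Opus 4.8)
The plan is to read off both assertions from \thmref{cor:group-actions}, which already classifies every automorphism of $R_2(p)$, and then to record the two explicit consequences the corollary highlights: the $\zeta$-factorization of $p(t)$ in its first part and the coefficient symmetry \eqref{coefficientsymmetry} in its second part. No input beyond \thmref{cor:group-actions} is needed; the content of the proof is the two elementary computations below.

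\emph{Part (1).} An automorphism $\phi$ of the stated type is exactly one of those in part (1) of \thmref{cor:group-actions}: $\phi(t)=\xi^2t$, $\phi(u)=\pm\xi u$ with $\xi$ a $4n$-th root of unity and $\alpha_{\gamma(i)}=\xi^2\alpha_i$. Put $\zeta:=\xi^2$; this is a $2n$-th root of unity, and if $\zeta$ has order $k$ then one may take $\xi=\exp(2\pi\imath/2k)$, which has order $2k$, so $\phi=\phi_\xi^\pm$ has order $2k$ and $\langle\phi_\xi^+\rangle\cong\Z_{2k}$. For the factorization I would observe that $\alpha_i\mapsto\zeta\alpha_i$ permutes the (distinct, nonzero) roots, and that for each $i$ the elements $\alpha_i,\zeta\alpha_i,\dots,\zeta^{k-1}\alpha_i$ are pairwise distinct because $\zeta$ has order $k$; hence $\gamma$ partitions $\{\alpha_1,\dots,\alpha_{2n}\}$ into $2n/k$ orbits of size exactly $k$, so in particular $k\mid 2n$. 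Choosing one representative $\alpha_m$ per orbit and relabeling, and using the identity $\prod_{j=0}^{k-1}(t-\zeta^j\alpha_m)=t^k-\alpha_m^k$, gives $p(t)=t\prod_{m=1}^{2n/k}(t^k-\alpha_m^k)$; expanding this product as a polynomial in $t^k$ via elementary symmetric functions of $\alpha_1^k,\dots,\alpha_{2n/k}^k$ yields \eqref{eqn:dihedral-setting-p-in-k-and-n}.

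\emph{Part (2).} The formulas for $\phi_\xi^\pm$ and for $\psi_c^\pm$ are those given in part (2) of \thmref{cor:group-actions} with $\psi=\psi_c=\psi_c^+$, namely $\psi_c(t)=c^2t^{-1}$ and $\psi_c(u)=\e c^{n+1}t^{-n-1}u$, where $\e=\pm1$ if $\prod_i\alpha_i=c^{2n}$ and $\e=\pm\imath$ if $\prod_i\alpha_i=-c^{2n}$; these are the two cases \eqref{pma} and \eqref{pmb}, since the coefficient of $t$ in $p(t)$ is $a_1=\prod_{i=1}^{2n}\alpha_i$ (here $2n$ is even). To obtain \eqref{coefficientsymmetry} I would use that $\psi_c$ must preserve the defining relation $u^2=p(t)$: applying $\psi_c$ to both sides gives $\e^2c^{2n+2}t^{-2n-2}p(t)=p(c^2t^{-1})$, and since $\e^2=a_1/c^{2n}=\pm1$ this reads $a_1c^2\,t^{-2n-2}p(t)=p(c^2t^{-1})$. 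Writing $p(t)=\sum_{r=1}^{2n+1}a_rt^r$, substituting $s=2n+2-r$ on the left-hand side, and comparing the coefficient of $t^{-s}$ with the right-hand side $\sum_s a_sc^{2s}t^{-s}$, one gets $a_1c^2a_{2n+2-s}=a_sc^{2s}$, i.e. $a_s=a_1c^{2-2s}a_{2n+2-s}=\pm c^{2n-2s+2}a_{2n+2-s}$, which is \eqref{coefficientsymmetry} with the sign matching $a_1=\pm c^{2n}$. Running the computation backward shows conversely that \eqref{coefficientsymmetry} together with \eqref{pma} or \eqref{pmb} does define an automorphism, so nothing is lost.

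\emph{Main obstacle.} Because \thmref{cor:group-actions} does the heavy lifting, there is no single hard step: the proof is bookkeeping. The two places that demand care are (i) in part (1), verifying that every $\gamma$-orbit on the roots has full length $k$, which is what forces $k\mid 2n$ and produces exactly $2n/k$ factors $t^k-\alpha_m^k$; and (ii) in part (2), keeping the index shift $s=2n+2-r$, the exponent $c^{2n-2s+2}$, and the identity $\e^2=a_1/c^{2n}$ mutually consistent so that the $\pm$ in \eqref{coefficientsymmetry} genuinely tracks the $\pm$ in $a_1=\pm c^{2n}$.
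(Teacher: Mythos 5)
Your proof is correct, and it follows the route the paper intends: the paper itself states this corollary by citation to \cite{MR3845909} without reproducing a proof, and the two computations you supply (the orbit decomposition of the nonzero roots under $\alpha\mapsto\zeta\alpha$ giving $p(t)=t\prod_m(t^k-\alpha_m^k)$, and the comparison of coefficients in $\epsilon^2c^{2n+2}t^{-2n-2}p(t)=p(c^2t^{-1})$ with $\epsilon^2=a_1/c^{2n}$) are exactly the bookkeeping that turns Theorem~\ref{cor:group-actions} into the stated factorization and the symmetry $a_s=\pm c^{2n-2s+2}a_{2n+2-s}$. No gaps.
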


%
%
%


From \cite{MR966871}, we know that for any
automorphism $\phi$ of the associative algebra $R_2(p)$, one obtains
an automorphism $\tau$ of the Lie algebra $\mathcal{R}_2(p):=\text{Der}(R_2(p))$ through the equation
\begin{equation}\label{eq:Liealgauto}
\tau(f(t)\partial)=\phi(f)(\phi\circ\partial\circ \phi^{-1})\enspace \quad  \mbox{ for all } f \in R_2(p).
\end{equation}
In addition, any Lie algebra automorphism of
$\mathcal{R}_2(p)$ can be obtained from \eqnref{eq:Liealgauto}. Denote by
$\tau_{\zeta}^{\pm}$ and $\sigma_{c}^{\pm}$ the Lie algebra
automorphisms corresponding to the associative algebra automorphisms
$\phi_{\zeta}^{\pm}$ and $\psi_{c}^{\pm}$ in \thmref{cor:group-actions}
\eqref{item:cor-cyclic-thm} and \eqref{cor:group-action-dihedral-maps-thm} respectively (if they indeed exist). For convenience,
denote
$\tau_{\zeta} :=\tau_{\zeta}^{+}$  and  $\sigma_{c}:=\sigma_{c}^{+}.$

 \section{Character Tables}
\subsection{Character Table and Irreducible Finite Dimensional Representations of $\mathbb U_n$}
 
  Recall 
 \begin{equation}
 \mathbb U_n=\langle x,y\,|\,x^2,y^{2n},xyxy^{n+1} \rangle.
 \end{equation}
 
 \begin{prop}  If $n$ is odd, then $\mathbb U_n=D_{n}$ is the dihedral group.

 Suppose $n=2(2s+1)$.   The conjugacy classes of $\mathbb U_{n}$ are 
 \begin{gather*}
 \{1\},\quad \{y^{2s+1}\}, \quad\{y^n\},\quad \{y^{6s+3}\}, \\
  \{y^{2j},y^{-2j}\} \quad 1\leq j<2s+1=n/2,\quad   \{y^{2j+1},y^{n-2j-1}\},\enspace  \{y^{n+2j+1},y^{2n-2j-1}\} \quad 1\leq 2j+1< l=n/2, 
\\  \{x,xy^4,\dots ,xy^{2n-4}\},\quad  \{xy,xy^5,\dots ,xy^{2n-3}\},\quad  \{xy^2,xy^6,\dots ,xy^{2n-2}\},  \\
 \{x^3,xy^7,\dots ,xy^{2n-1}\} 
 \end{gather*}
 for a total of $n+6$ conjugacy classes.
 
 Suppose now $n=4s$.  The conjugacy classes of $\mathbb U_{n}$ are then 
\begin{gather*}
 \{1\}, \quad \{y^{2l}\},   \\
  \{y^{2j},y^{-2j}\} \quad 1\leq j<l,\quad   \{y^{2j+1},y^{n-2j-1}\},\enspace  \{y^{n+2j+1},y^{2n-2j-1}\} \quad 1\leq 2j+1< l, 
\\  \{x,xy^2,\dots, xy^{2n-2}\},\quad  \{xy,xy^3,\dots ,xy^{2n-1}\} 
 \end{gather*}
for a total of $n+3$ conjugacy classes.
 \end{prop}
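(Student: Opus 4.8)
The plan is to dispose of the three cases in turn. When $n$ is odd the assertion $\mathbb{U}_n\cong D_n$ is exactly the content of the corollary above describing $\mathbb{U}_n$: from $x^2=1$ and $xyxy^{n+1}=1$ one gets $xyx^{-1}=y^{-(n+1)}=y^{n-1}$, whence $y=x^2yx^{-2}=y^{(n-1)^2}$ and so $y^{(n-1)^2-1}=y^{n(n-2)}=1$; together with $y^{2n}=1$ and the fact that $\gcd\big(n(n-2),2n\big)=n\gcd(n-2,2)=n$ for $n$ odd, this forces $y^n=1$, and then $xyx^{-1}=y^{n-1}=y^{-1}$, the dihedral relation. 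For the remaining two cases I would fix $n$ even and use the presentation $\mathbb{U}_n=\langle x,y\mid y^{2n}=1,\ x^2=1,\ yxy=xy^n\rangle$ of the group of order $4n$ from the same corollary, which yields the conjugation rules $xy^kx^{-1}=(xyx^{-1})^k=y^{(n-1)k}$, $yx=xy^{n-1}$, and hence $yxy^{-1}=xy^{n-2}$; since $n$ is even, $nk\equiv0$ or $n\pmod{2n}$ according as $k$ is even or odd, so $(n-1)k\equiv-k\pmod{2n}$ for $k$ even and $(n-1)k\equiv n-k\pmod{2n}$ for $k$ odd.

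Next I would enumerate the conjugacy classes lying in the cyclic subgroup $\langle y\rangle$. Conjugating $y^k$ by $y^j$ fixes it and by $xy^j$ sends it to $y^{(n-1)k}$, so the class of $y^k$ is $\{y^k,y^{-k}\}$ when $k$ is even and $\{y^k,y^{n-k}\}$ when $k$ is odd, and it is a singleton exactly when $(n-2)k\equiv0\pmod{2n}$, i.e.\ when $k$ is a multiple of $2n/d$, where $d:=\gcd(n-2,2n)=\gcd(n-2,4)$, so $d=4$ if $n\equiv2\pmod4$ and $d=2$ if $4\mid n$. In the case $n=2(2s+1)$, where $n/2$ is odd, this gives the four singletons $\{1\},\ \{y^{n/2}\}=\{y^{2s+1}\},\ \{y^n\},\ \{y^{3n/2}\}=\{y^{6s+3}\}$, while the remaining powers break into the two-element classes $\{y^{2j},y^{-2j}\}$ with $1\le j<n/2$, together with $\{y^{2j+1},y^{n-2j-1}\}$ and their $y^n$-translates $\{y^{n+2j+1},y^{2n-2j-1}\}$ with $1\le 2j+1<n/2$; in the case $n=4s$, where $n/2$ is even, only $\{1\}$ and $\{y^n\}$ are singletons and $k\mapsto n-k$ has no odd fixed point, so $\langle y\rangle$ contributes $\{1\}$, $\{y^n\}$, the $n/2-1$ classes $\{y^{2j},y^{-2j}\}$, and the $n/2$ odd classes $\{y^{2j+1},y^{n-2j-1}\},\{y^{n+2j+1},y^{2n-2j-1}\}$.

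Then I would handle the coset $x\langle y\rangle$. Conjugation by $y^j$ sends $xy^k$ to $xy^{\,k+j(n-2)}$, so the $\langle y\rangle$-orbits inside the coset are the residue classes of the exponent modulo $d$; and conjugation by $x$ sends $xy^k$ to $xy^{(n-1)k}$, which lands in the same orbit because $d\mid n-2$ gives $(n-1)k\equiv k\pmod d$. Hence $x\langle y\rangle$ splits into exactly $d$ conjugacy classes: the two classes $\{x,xy^2,\dots,xy^{2n-2}\}$ and $\{xy,xy^3,\dots,xy^{2n-1}\}$ when $4\mid n$; and the four classes with exponents $\equiv0,1,2,3\pmod4$, namely $\{x,xy^4,\dots,xy^{2n-4}\}$, $\{xy,xy^5,\dots,xy^{2n-3}\}$, $\{xy^2,xy^6,\dots,xy^{2n-2}\}$, $\{xy^3,xy^7,\dots,xy^{2n-1}\}$, when $n\equiv2\pmod4$ (the entry ``$x^3$'' in the displayed list should read $xy^3$, since $x^3=x$). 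Collecting the counts: for $n=2(2s+1)$ the subgroup $\langle y\rangle$ gives $(n/2+1)+(n/2+1)=n+2$ classes and the coset gives $4$, for a total of $n+6$; for $n=4s$ it gives $(n/2+1)+n/2=n+1$ classes and the coset gives $2$, for a total of $n+3$; and as a consistency check the class sizes add up to $4n=|\mathbb{U}_n|$.

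I do not anticipate a serious obstacle: the structural ingredients --- the semidirect-product description, the two conjugation rules, the identity $\gcd(n-2,2n)=\gcd(n-2,4)$, and the triviality of $(n-1)k\equiv k\pmod d$ once $d\mid n-2$ --- are all elementary. The single dichotomy to keep track of is $n\equiv2$ versus $n\equiv0\pmod4$: in the first case $\gcd(n-2,2n)=4$ and $n/2$ is odd, producing four singletons in $\langle y\rangle$ and four classes in the coset; in the second $\gcd(n-2,2n)=2$ and $n/2$ is even, producing two singletons and two coset classes; and this is exactly what separates $n+6$ from $n+3$. The remaining work --- checking that the listed families are pairwise disjoint and exhaust $\mathbb{U}_n$, and matching the ``translated'' odd family to the class of $y^{n+2j+1}$ --- is routine modular arithmetic.
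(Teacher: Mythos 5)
Your proof is correct and follows essentially the same route as the paper's: reduce $n$ odd to $D_n$ by deriving $y^n=1$ from the relations, then for $n$ even use the normal form $\{y^i, xy^i\}$ and the conjugation formulas $xy^kx^{-1}=y^{(n-1)k}$ and $y^j(xy^k)y^{-j}=xy^{k+j(n-2)}$ to enumerate orbits; your packaging of the coset case via $\gcd(n-2,2n)=\gcd(n-2,4)$ is just a tidier version of the paper's separate even/odd-$r$ computation. You are also right that the displayed ``$x^3$'' is a typo for $xy^3$.
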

 \begin{proof}
 First we make the observation for $n$ odd.  
 We have $
yx=xy^{n-1}$ as $x^2=1$, $xyxy^{n+1}=1$ and $y^{2n}=1$.   Hence
$y^{2k+1}x=xy^{(2k+1)(n-1)}=xy^{n-1-2k}$.   Thus if $n=2l+1$ is odd we have $y^nx=y^{2l+1}x=xy^{n-2l-1}=x$.  But then $y^n=1$.   Thus our generators and relations would be 
$$
x^2=1,\quad y^n=1, \quad xyx=y^{-1},
$$
and $\mathbb U_n=D_{n}$ is the dihedral group.

 For the rest of the proof, we assume $n=2l$ is even.  We have $y^kx=xy^{k(n-1)}$ so 
 $$
 xy^{2j}x=y^{2j(n-1)}=y^{-2j}
 $$
 and 
 $$
 xy^{2j+1}x=y^{(2j+1)(n-1)}=y^{n-2j-1}.
 $$
 Thus we have two element conjugacy classes
 $$
 \{y^{2j},y^{2n-2j}\},\quad \{y^{2j+1},y^{n-2j-1}\}
 $$
 provided $y^{2j}\neq y^{2n-2j}$ and $y^{2j+1}\neq y^{n-2j-1}$.   For $1\leq j< n/2 $ one has $y^{2j}\neq y^{2n-2j}$ and we get all of the distinct such conjugacy classes if $1\leq j<n/2$. 
 
 We break this up into two different cases $n=2(2s+1)$ and $n=4s$, where $l=2s+1$ or $l=2s$, respectively. 
 
 \vskip 10pt
 \noindent{\bf Case when $n=2(2s+1)$.} 
  For $y^{2j+1}= y^{n-2j-1}$, we have $4j+2\equiv n\mod 2n$ so that $2(2j+1)=(2k+1)n$ for some $k$.  Hence $n$ must be even so $n=2l$.  Now $2j+1=(2k+1)l$ and $l$ must be odd.  Moreover $1\leq 2j+1=(2k+1)l<2n=4l$.  This implies $2k+1\in \{1,3\}$ or $2j+1=l$ or $2j+1=3l$.    If $n=2l$, then for $2j+1=l$ or $3l$ we get 
  $y^{n-2j-1}=y^{2l-l}=y^l=y^{2j+1}$ and $y^{n-2j-1}=y^{2l-3l}=y^{-l}=y^{3l}=y^{2j+1}$ in the later case as $y^{4l}=y^{2n}=1$.
 
 Consider now the case $n$=2l even with $l=2s+1$ odd and $n>4$.  We have the set of elements 
 \begin{equation}
 \begin{split}
 \{\underline{1}, y, \widehat{y^2},\dots, &\boxed{y^{2j+1}},\dots, \underline{y^l},\dots, \boxed{y^{2l-2j-1}},\dots, \underline{y^{2l}},\dots, \\ &\overline{y^{n+2r+1}},\dots ,\underline{y^{3l}},\dots, \overline{y^{2n-2r-1}},\dots,  \widehat{y^{2n-2}},y^{2n-1}\} \\ 
 \end{split}
\end{equation}
that breaks up into  conjugacy classes with one element in them (the underlined elements above): 
 $$
\{1\},\quad \{y^{l}\},\quad \{y^{2l}=y^n\},\quad \{y^{3l}\},
 $$
 the conjugacy classes with even exponents $\{y^{2j},y^{2n-2j}\}$ with $1\leq j<l=n/2$ (the hatted elements above)
 and the other conjugacy classes with $y$ having an odd positive exponent are of the form
 $$
  \{y^{2j+1},y^{n-2j-1}\} \quad \mbox{ for } 1\leq 2j+1< l, 
 $$
 and 
 $$
  \{y^{n+2j+1},y^{2n-2j-1}\} \quad \mbox{ for } 1\leq 2j+1< l 
 $$
(the boxed and over lined elements above).
There are $l-1=(n/2)-1$ of the latter two types of conjugacy classes.

Using the identities $x^2=y^{2n}=1=xyxy^{n+1}$ we get $yx=xy^{n-1}$ and $y^rx=xy^{r(n-1)}$.  
Hence
 \begin{align*}
 y^rxy^ky^{-r}=xy^{k+r(n-1)-l}=xy^{k+r(n-2)}
 \end{align*}
 and for $r=2q+1$  we get
 \begin{align*}
 y^rxy^ky^{-r}=xy^{k+(2q+1)(n-2)}=xy^{k-4q+n-2}.
 \end{align*}
Consequently recalling $n=2(2a+1)$ we get $y^rxy^ky^{-r}=xy^{k+4(a-q)}$.
Moreover
 \begin{align*}
x (xy^k)x=y^{k}x=xy^{k(n-1)}=xy^{k(4a+1)}=xy^{k+4ak}.
 \end{align*}
 This implies that the set of elements
 $$
 \{x,xy,xy^2,\dots , xy^{2n-1}\}
 $$
 breaks up into four distinct conjugacy classes 
 $$
 \{x,xy^4,\dots xy^{2n-4}\},\quad  \{xy,xy^5,\dots xy^{2n-3}\},\quad  \{xy^2,xy^6,\dots xy^{2n-2}\}, 
 $$
 and 
 $$
  \{x^3,xy^7,\dots xy^{2n-1}\}.
 $$
 Thus in the case of $n=2(2l+1)$ there are exactly $n+6$ conjugacy classes. 
 
  \vskip 10pt
 \noindent{\bf Case when $n=4s$.} 
 Now consider the case of $l=2s$ so $n=4s$. 
We can divide  
$$
 \{\underline{1}, y,\widehat{y^2},\dots,\boxed{y^{2j+1}},\dots, \boxed{y^{2l-2j-1}},\dots, \underline{y^{2l}},\dots \overline{y^{n+2r+1}},\dots  , \overline{y^{2n-2r-1}},\dots,  \widehat{y^{2n-2}},y^{2n-1}\}
 $$
 into  conjugacy classes with one element in them (the underlined elements above): 
 $$
\{1\},\quad \{y^{2l}=y^n\}, 
 $$
 the conjugacy classes with even exponents $\{y^{2j},y^{2n-2j}\}$ with $1\leq j<l=n/2$ (the hatted elements above)
 and the other conjugacy classes with $y$ having an odd positive exponent are of the form
 $$
  \{y^{2j+1},y^{n-2j-1}\} \quad \mbox{ for } 1\leq 2j+1< l, 
 $$
 and 
 $$
  \{y^{n+2j+1},y^{2n-2j-1}\} \quad \mbox{ for }  1\leq 2j+1< l 
 $$
(the boxed and over lined elements above).
There are $2(l/2)=n/2$ of the latter two types of conjugacy classes. 

We lastly want to break 
 $$
 \{x,xy,xy^2,\dots , xy^{2n-1}\}
 $$
into distinct conjugacy classes.  
 For this we observe 
 \begin{equation}\label{conjugationbyy}
 y^rxy^ky^{-r}=xy^{k+r(n-2)}=xy^{k+4rs-2r}
 \end{equation}
 and 
\begin{align*}
x (xy^k)x=y^{k}x=xy^{k(n-1)}=xy^{k(4s-1)}=xy^{4ks-k}=y^k(xy^k)y^{-k}, 
 \end{align*}
 so we need only consider \eqnref{conjugationbyy}.   When $r=2q+1$ we get
 $$
  y^rxy^ky^{-r}=xy^{k+r(n-2)}=xy^{k+(2q+1)(n-2)} =xy^{k+2qn+n-2-4q}=xy^{k+4(s-q)-2}
  $$
  and for $r=2p$ we get
  $$
  y^rxy^ky^{-r}=xy^{k+r(n-2)}=xy^{k+2p(n-2)} =xy^{k-4p}.
  $$
  Since $p$ and $q$ are arbitrary integers we get two conjugacy classes:
  $$
  \{x,xy^2,\dots, xy^{2n-2}\},\quad \{xy,xy^3,\dots, xy^{2n-1}\}.
  $$
Thus in the case of $n=4$ we get exactly $n+3$ conjugacy classes. 
  \end{proof}

\subsubsection{Representations of $\mathbb U_n$, $n$ even} 
\begin{prop}
When $n=2(2l+1)$ the irreducible representations of $\mathbb U_n$ are the eight one dimensional irreducible representations $\rho_1$ (the trivial representation), 
\begin{gather*}
\rho_2(x)=1,\enspace \rho_2(y)=-1,\quad \rho_3(x)=-1,\enspace\rho_3(y)=-1,\quad \rho_4(x)=-1,\enspace\rho_4(y)=1,  \\
\rho_5(x)=1,\enspace \rho_5(y)=\imath,\quad \rho_6(x)=1,\enspace \rho_6(y)=-\imath,\quad \rho_7(x)=-1,\\
\rho_7(y)=-\imath,\quad \rho_8(x)=-1,\enspace\rho_8(y)=\imath,
\end{gather*}
and the two dimensional irreducible representations for $\zeta=\exp(2\pi\imath /2n)$, 
\begin{equation}\label{2dimlrep1}
R_h(x)=\begin{pmatrix} 0 & 1 \\ 1 & 0 \end{pmatrix}
,\quad R_h (y)=\begin{pmatrix} \zeta^h & 0 \\ 0 & (-1)^h\zeta^{-h} \end{pmatrix}
\end{equation}
 for $h$ odd (excluding $h=n/2$ if $n=2(2l+1)$ with $1\leq h<n$ or $n<h<2n$ and $h$ even with $2\leq h<2n$, $h\neq n$. 
  Thus there are $8+(n/2)-1+(n/2)-1=n+6$ irreducible representations.  

When $n=4l$ the irreducible representations of $\mathbb U_n$ are the four one dimensional irreducible representations $\rho_1$ (the trivial representation): 
\begin{gather*}
\rho_2(x)=1,\enspace \rho_2(y)=-1,\quad \rho_3(x)=-1,\enspace\rho_3(y)=-1,\quad \rho_4(x)=-1,\enspace\rho_4(y)=1,  
\end{gather*}
and the two dimensional irreducible representations for $\zeta=\exp(2\pi\imath /2n)$ given by \eqnref{2dimlrep1}.  
In this case, there are $4+(n/2)+(n/2)-1=n+3$ irreducible representations.  
\end{prop}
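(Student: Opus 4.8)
The plan is to treat the case $n$ even throughout (the two subcases $n=2(2l+1)$ and $n=4l$ being handled uniformly by putting $d:=\gcd(n-2,2n)=\gcd(n-2,4)$, which is $4$ when $n\equiv2\pmod4$ and $2$ when $n\equiv0\pmod4$). Here $|\U_n|=4n$, and from $x^2=y^{2n}=xyxy^{n+1}=1$ one gets $xyx=y^{n-1}$, hence $xyx^{-1}=y^{n-1}$ and $[x,y]=y^{2-n}$. The strategy is: (i) compute the abelianization to read off all one-dimensional representations; (ii) check the matrices $R_h$ really define representations; (iii) decide which $R_h$ are irreducible and when two are equivalent; (iv) count, and invoke the standard facts that $\#\{\text{irreducibles}\}=\#\{\text{conjugacy classes}\}$ and $\sum_i(\dim V_i)^2=|\U_n|$ (Serre \cite{MR0450380}), together with the conjugacy-class counts in the preceding Proposition, to conclude the list is exhaustive.

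For (i): since $\U_n/\langle y^{n-2}\rangle$ is abelian while $y^{n-2}=[y,x]\in[\U_n,\U_n]$, the commutator subgroup equals $\langle y^{n-2}\rangle=\langle y^{d}\rangle$, which has order $2n/d$; hence $\U_n^{\mathrm{ab}}$ has order $4n/(2n/d)=2d$, is generated by the images of $x$ (of order dividing $2$) and of $y$ (of order $d$), and so is isomorphic to $\Z_2\times\Z_4$ ($n\equiv2\pmod4$) or $\Z_2\times\Z_2$ ($n\equiv0\pmod4$). Listing the linear characters $x\mapsto\pm1$, $y\mapsto$ (a $d$-th root of unity) gives exactly the one-dimensional representations $\rho_i$ of the statement: eight of them when $n=2(2l+1)$, four when $n=4l$.

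For (ii)--(iii): with $\zeta=\exp(2\pi\imath/2n)$ I would verify that $R_h(x)=\bigl(\begin{smallmatrix}0&1\\1&0\end{smallmatrix}\bigr)$ and $R_h(y)=\mathrm{diag}(\zeta^h,(-1)^h\zeta^{-h})$ respect the relations: $R_h(x)^2=I$ and $R_h(y)^{2n}=I$ are immediate, and a short computation using $\zeta^n=-1$ together with $n$ even (this is precisely where that hypothesis enters) gives $R_h(x)R_h(y)R_h(x)R_h(y)^{n+1}=I$. Since $(-1)^h\zeta^{-h}=\zeta^{h(n-1)}$, the representation $R_h$ depends only on $h\bmod 2n$. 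Now $R_h$ is reducible exactly when $R_h(y)$ is scalar, i.e. $\zeta^h=(-1)^h\zeta^{-h}$, i.e. $h(n-2)\equiv0\pmod{2n}$ (then the lines through $(1,1)$ and $(1,-1)$ are invariant); otherwise the only $R_h(y)$-invariant lines are the coordinate axes, which $R_h(x)$ interchanges, so $R_h$ is irreducible. Finally, from $\chi_{R_h}(xy^j)=0$ and $\chi_{R_h}(y^j)=\zeta^{hj}+\zeta^{hj(n-1)}$ one sees $R_h\cong R_{h'}$ iff $\{h,h(n-1)\}=\{h',h'(n-1)\}$ in $\Z/2n\Z$, i.e. iff $h'$ lies in the orbit of $h$ under the involution $\sigma(h):=h(n-1)\bmod 2n$ (an involution because $(n-1)^2\equiv1\pmod{2n}$ for $n$ even), which preserves parity, acts by $h\mapsto 2n-h$ on even residues and $h\mapsto n-h$ on odd ones, and whose fixed-point set is exactly the set of reducible indices, of cardinality $d$ (namely $\{0,n\}$, together with $\{n/2,3n/2\}$ when $n\equiv2\pmod4$).

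It remains (iv) to count. The irreducible $R_h$ fall into one isomorphism class per two-element $\sigma$-orbit, so there are $(2n-d)/2$ of them, i.e. $n-2$ classes when $n\equiv2\pmod4$ and $n-1$ when $n\equiv0\pmod4$; choosing one index from each such orbit (for even $h$ the values $2\le h\le n-2$; for odd $h$ one value from each $\sigma$-pair inside $\{1,\dots,n-1\}$ and inside $\{n+1,\dots,2n-1\}$, omitting the fixed points $n/2,3n/2$ in the $n\equiv2$ case) reproduces the index set in the statement. Adding the linear ones gives $8+(n-2)=n+6$ irreducibles when $n=2(2l+1)$ and $4+(n-1)=n+3$ when $n=4l$, matching the conjugacy-class counts of the preceding Proposition; moreover $\sum_i(\dim V_i)^2=8\cdot1+(n-2)\cdot4=4n$ (respectively $4\cdot1+(n-1)\cdot4=4n$) $=|\U_n|$, so the $\rho_i$ and the irreducible $R_h$ exhaust $\mathrm{Irr}(\U_n)$. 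I expect the only real friction to be the final matching of a $\sigma$-orbit transversal with the precise ranges of $h$ written in the statement, and keeping the $(-1)^h$ signs straight in the relation-check and the character computation; the remainder is routine finite-group representation theory.
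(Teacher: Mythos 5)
Your proposal is correct and follows essentially the same route as the paper: exhibit the linear characters and the matrices $R_h$, verify the relations, determine irreducibility and the equivalences among the $R_h$, and close by matching the count against the number of conjugacy classes and $\sum_i(\dim V_i)^2=4n$. The only differences are organizational --- you derive the one-dimensional representations from the abelianization $\U_n^{\mathrm{ab}}\cong\Z_2\times\Z_{\gcd(n-2,4)}$ rather than by solving $\rho(x)^2=\rho(y)^{2n}=\rho(y)^{n+2}=1$ directly, and you package the equivalences $R_h\cong R_{h(n-1)}$ via the involution $h\mapsto h(n-1)\bmod 2n$ and character theory instead of exhibiting explicit intertwiners, which incidentally tidies up the slightly garbled index ranges in the statement.
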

Note in the case $n=4l$ above we have the sum of the squares of the dimensions of the irreducible representations with their multiplicities is $4+4(n-1)=4n=|\mathbb U_n|$ as it should be and also the number of conjugacy classes $n+3$ is equal to the number of inequivalent irreducible representations.   In the case $n=2(2l+1)$ we have $n+6=8+(n-2)$ conjugacy classes and $8+(n-2)$ irreducible representations. Then we can check  the sum of the squares of the dimensions of the irreducible representations with their multiplicities  $8+4(n-2)=4n$. 
\begin{proof}  For a 1-dimensional representation $\rho$ of $\mathbb U_k$ we must check the defining relations for $\mathbb U_k$:
 $$
 \rho(x)\rho(y)\rho(x)\rho(y)^{n+1}=1=\rho(x)^2=\rho(y)^{2n}
 $$
where $\rho(x),\rho(y)\in\mathbb C^\times$.  Thus $\rho(x)=\pm 1$ and from the first equation and last equations we get $\rho(y)^{n+2}=1=\rho(y)^{2n}$ so $\rho(y)^{2n+4}=(\rho(y)^{n+2})^2=1$.   Hence $\rho(y)^4=1$ and $\rho(y)\in\{\pm 1,\pm \imath\}$.  When $n=4l$ we have $\rho(y)^n=\rho(y)^{4l}=1=\rho(y)^{n+2}$ so that $\rho(y)^2=1$ and $\rho(y)=\pm 1$.

 For $0\leq h<2n$ we have representations $R_h:\mathbb U_n\to \text{GL}_2(\mathbb C)$ as above since
 $$
 R_h(x)^2=\begin{pmatrix} 0 & 1 \\ 1 & 0 \end{pmatrix}^2=I_2=R_h (y)^{2n}=\begin{pmatrix} \zeta^h & 0 \\ 0 & (-1)^h\zeta^{-h} \end{pmatrix}^{2n},
 $$
 and 
\begin{align*}
  R_h(x)R_h(y)R_h(x)R_h(y)^{n+1}&=\begin{pmatrix} 0 & 1 \\ 1 & 0 \end{pmatrix}\begin{pmatrix} \zeta^h & 0 \\ 0 & (-1)^h\zeta^{-h} \end{pmatrix}\begin{pmatrix} 0 & 1 \\ 1 & 0 \end{pmatrix}\begin{pmatrix} \zeta^h & 0 \\ 0 & (-1)^h\zeta^{-h} \end{pmatrix}^{n+1}  \\
  &= \begin{pmatrix}  0 & (-1)^h\zeta^{-h} \\\zeta^h & 0  \end{pmatrix} \begin{pmatrix} 0 & 1 \\ 1 & 0 \end{pmatrix}\begin{pmatrix} \zeta^{h(n+1)} & 0 \\ 0 & (-1)^{h(n+1)}\zeta^{-h(n+1)} \end{pmatrix} \\
  &= \begin{pmatrix}  0 & (-1)^h\zeta^{-h} \\\zeta^h & 0  \end{pmatrix} \begin{pmatrix} 0 & 1 \\ 1 & 0 \end{pmatrix}\begin{pmatrix} (-1)^h\zeta^{h} & 0 \\ 0 & \zeta^{-h} \end{pmatrix}  \\
  &= \begin{pmatrix}  0 & (-1)^h\zeta^{-h} \\\zeta^h & 0  \end{pmatrix}  \begin{pmatrix}  0 & \zeta^{-h} \\ (-1)^h\zeta^{h} & 0\end{pmatrix}  \\
  &=I_2
 \end{align*}
 as $n$ is even.  Consequently $R_h$ is a representation. 
 
 For $1\leq h<n$ odd, the representations $R_h$ and $R_{n-h}$ are isomorphic with change of basis matrix 
 $$
 T=\begin{pmatrix} 0 & -1 \\ -1 & 0\end{pmatrix}
 $$
 as 
 \begin{align}\label{equivalent}
 \begin{pmatrix} 0 & -1 \\ -1 & 0\end{pmatrix}\begin{pmatrix} \zeta^h & 0 \\ 0 & (-1)^h\zeta^{-h} \end{pmatrix} \begin{pmatrix} 0 & -1 \\ -1 & 0\end{pmatrix}&=\begin{pmatrix} (-1)^h\zeta^{-h} & 0 \\ 0 & \zeta^{h} \end{pmatrix} =\begin{pmatrix} \zeta^{n-h} & 0 \\ 0 & \zeta^{-n+h} \end{pmatrix}.  
 \end{align}
  Similarly $R_g$ and $R_{3n-g}$ are equivalent for odd $g$ with $n<g<2n$.  Note that when $h$ is even \eqnref{equivalent} gives us $R_h$ and $R_{2n-h}$ are equivalent. 
 
 Let us show that $R_h$ and $R_g$ are not equivalent for $1\leq h <n<g<2n$ for odd $g$ and $h$.  Indeed suppose that such an isomorphism exits.  Then there exists a change of basis matrix 
 $$
 \begin{pmatrix} a & b \\ c & d \end{pmatrix} 
 $$
 implementing this isomorphism.  
 
 Simplifying for $h$ odd
 $$
 \begin{pmatrix} a & b \\ c & d \end{pmatrix} 
\begin{pmatrix} 0 & 1 \\ 1 & 0 \end{pmatrix}
=\begin{pmatrix} 0 & 1 \\ 1 & 0 \end{pmatrix}
 \begin{pmatrix} a & b \\ c & d \end{pmatrix}  \quad\text{ and }\quad  
 \begin{pmatrix} a & b \\ c & d \end{pmatrix} \begin{pmatrix} \zeta^h & 0 \\ 0 & -\zeta^{-h} \end{pmatrix} =\begin{pmatrix} \zeta^g & 0 \\ 0 & -\zeta^{-g} \end{pmatrix}
 \begin{pmatrix} a & b \\ c & d \end{pmatrix} $$
 gives us 
 $$
 \begin{pmatrix} a & b \\ c & d \end{pmatrix} =\begin{pmatrix} a & b \\ b& a \end{pmatrix}  \quad\text{ and }\quad   \begin{pmatrix} \zeta^h a &  -\zeta^{-h} b \\ \zeta^h b&  -\zeta^{-h} a \end{pmatrix} =
 \begin{pmatrix} \zeta^g a & \zeta^g b \\ -\zeta^{-g} b& -\zeta^{-g} a \end{pmatrix}. 
 $$

 Thus $\zeta^{h-g}a=a$ and $\zeta^{h+g}b=-b$.   If $a\neq 0$, then $h\equiv g\mod 2n$.  If $a=0$, then $h+g\equiv n\mod 2n$ so that $h+g=(2k+1)n$ but $h+g<n+2n=3n$.  Thus $2k+1=1$ and we get $h+g=n$ so that $g=n-h<n$, a contradiction.  
 
 Next we show that these representations are irreducible for $h$ odd when $n=4s$.  Otherwise say $R_h$ fixes a one dimensional space spanned by $(a,b)^t$. 
 Then the only one dimensional subspaces fixed by $R_h(x)$ are spanned by
 $$
 \begin{pmatrix}
 \:\:\:1 \\ 
 \pm 1 
 \end{pmatrix}.
 $$
 On the other hand 
  $$
\begin{pmatrix} \zeta^h & 0 \\ 0 & -\zeta^{-h} \end{pmatrix} \begin{pmatrix}1 \\  1 \end{pmatrix} =\begin{pmatrix} \zeta^h \\ -\zeta^{-h} \end{pmatrix} = \zeta^h \begin{pmatrix}1\\ -\zeta^{-2h} \end{pmatrix}
 $$
 which is an eigenvalue if and only if $\zeta^{2h}=-1$ or $h=n/2$.    Now $h$ odd means that $n=2(2s+1)$. If $n=4s$, then there is no invariant vector. 
Similarly
 $$
\begin{pmatrix} \zeta^h & 0 \\ 0 & -\zeta^{-h} \end{pmatrix} \begin{pmatrix}1 \\  -1 \end{pmatrix} =\begin{pmatrix} \zeta^h \\ \zeta^{-h} \end{pmatrix}= \zeta^h\begin{pmatrix}1  \\ \zeta^{-2h} \end{pmatrix}
 $$
leads also to $n=2(2s+1)$.   As a consequent we have $(n/2)-1$ irreducible representations $R_h$ , $h$ odd, if $n=4s$ and $n/2$ if $n=4s$. 
 
  Note that when $a$ is even \eqnref{equivalent} gives us $R_a$ and $R_{2n-a}$ are equivalent.  Moreover $R_n$ is reducible as
  $$
  R_n(y)=\begin{pmatrix} \zeta^n & 0 \\ 0 & (-1)^n\zeta^{-n} \end{pmatrix}=\begin{pmatrix}-1 & 0 \\ 0 &-1 \end{pmatrix}.
  $$
 Simplifying for $1\leq h,g<n$ 
 $$
 \begin{pmatrix} a & b \\ c & d \end{pmatrix} 
\begin{pmatrix} 0 & 1 \\ 1 & 0 \end{pmatrix}
=\begin{pmatrix} 0 & 1 \\ 1 & 0 \end{pmatrix}
 \begin{pmatrix} a & b \\ c & d \end{pmatrix}  \quad\text{ and }\quad  
 \begin{pmatrix} a & b \\ c & d \end{pmatrix} \begin{pmatrix} \zeta^{2h} & 0 \\ 0 & \zeta^{-2h} \end{pmatrix} =\begin{pmatrix} \zeta^{2g} & 0 \\ 0 & \zeta^{-2g} \end{pmatrix}
 \begin{pmatrix} a & b \\ c & d \end{pmatrix} $$
 give us 
 $$
 \begin{pmatrix} a & b \\ c & d \end{pmatrix} =\begin{pmatrix} a & b \\ b& a \end{pmatrix}  \quad\text{ and }\quad   \begin{pmatrix} \zeta^{2h} a &  \zeta^{-2h} b \\ \zeta^{2h} b&  \zeta^{-2h} a \end{pmatrix} =
 \begin{pmatrix} \zeta^{2g} a & \zeta^{2g} b \\ \zeta^{-2g} b& \zeta^{-2g} a \end{pmatrix}. 
 $$
Hence either $2h=2g\mod 2n$ or $2h+2g\equiv 0\mod 2n$.   In the first case $h-g=kn$ for some $k$. But $1\leq h,g\leq n$ leads to $h=g$.  If $h+g=nk$ for some $k$, then since $1\leq g,h<n$ one gets $k=1$ and $g=n-h$ or $2g=2n-2h$ which is the equivalence given above from \eqnref{equivalent}.  

Let us check that $R_{2h}$ are irreducible for $2\leq 2h<n$.  Otherwise $R_{2h}$ fixes a one dimensional subspace spanned by either $(1,1)^t$ or $(1,-1)^t$. We calculate  
$$
\begin{pmatrix} \zeta^{2h} & 0 \\ 0 & \zeta^{-2h} \end{pmatrix} \begin{pmatrix} 1 \\ 1 \end{pmatrix}=\begin{pmatrix} \zeta^{2h} \\   \zeta^{-2h} \end{pmatrix}=\zeta^{2h} \begin{pmatrix} 1 \\   \zeta^{-4h} \end{pmatrix}
$$
so that one would need $4h= 2n$ or $2h=n$.  But $2h<n$. 

Similarly
$$
\begin{pmatrix} \zeta^{2h} & 0 \\ 0 & \zeta^{-2h} \end{pmatrix} \begin{pmatrix} 1 \\ -1 \end{pmatrix}=\begin{pmatrix} \zeta^{2h} \\   -\zeta^{-2h} \end{pmatrix}=\zeta^{2h} \begin{pmatrix} 1 \\  - \zeta^{-4h} \end{pmatrix}
$$
implies $4h=2n$ again a contradiction.   Consequently there are $(n/2)-1$ irreducible representations of the form $R_{2h}$ with $2\leq 2h<n$. 

 Observe that $R_h$, $h$ odd, can not be equivalent to $R_g$, $g$ even as otherwise there is a change of basis matrix $A$ such that  $AR_h(a)A^{-1}=R_g(a)$ for all $a\in\mathbb U_n$ which means in particular that $-1=\det R_h(y)=\det R_g(y)=1$.
\end{proof}
 \subsection{Character Table and Irreducible Finite Dimensional Representations of $\text{Dic}_n$}
The dicyclic group is defined to be 
$$
 \mathbb G_n=\text{Dic}_n:=\langle y,x : y^{2n}=1,\enspace x^2=y^n,\enspace yxy=x\rangle.
$$

\begin{prop}[\cite{MR1119304}]\label{dicn}
 The conjugacy classes of $\mathbb G_{n}$ are 
 \begin{enumerate}
\item\label{item-conj1} $\{1\}, \{y^n\}, \{y^i, y^{2n-i}\} \quad \text{where\ } 1\leq i\leq n-1$, 
\item\label{item-conj2} $\{x, xy^2,\ldots,x^{2n-2}\}, \{xy, xy^3,\ldots, xy^{2n-1}\}.$
\end{enumerate}
\end{prop}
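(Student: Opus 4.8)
The plan is to read off the conjugacy classes directly from the presentation $\mathbb{G}_n = \langle y, x : y^{2n} = 1,\ x^2 = y^n,\ yxy = x\rangle$. From this presentation every element of $\mathbb{G}_n$ is uniquely of the form $y^i$ or $xy^i$ with $0 \le i \le 2n-1$, so $|\mathbb{G}_n| = 4n$; this normal form was already recorded above.

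First I would extract the commutation rules I need. The relation $yxy = x$ rewrites as $yx = xy^{-1}$, and iterating gives $y^r x = x y^{-r}$, equivalently $x y^k = y^{-k} x$, for all integers $r, k$. I also note that $x^2 = y^n$ is central: it commutes with $y$ trivially, and $x y^n x^{-1} = y^{-n} = y^n$ because $y^{2n} = 1$. With these identities, conjugation becomes a one-line computation on each of the two cosets of the cyclic subgroup $\langle y \rangle$.

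On $\langle y \rangle$: powers of $y$ centralize $y^i$, while $x y^i x^{-1} = y^{-i}$ and, more generally, $(x y^k) y^i (x y^k)^{-1} = y^{-i}$. Hence the class of $y^i$ is $\{y^i, y^{2n-i}\}$, which is a singleton exactly when $2i \equiv 0 \pmod{2n}$, i.e.\ for $i = 0$ and $i = n$; this produces the classes $\{1\}$, $\{y^n\}$, and the $n-1$ two-element classes $\{y^i, y^{2n-i}\}$, $1 \le i \le n-1$. On the coset $x\langle y \rangle$: using $y^r x = x y^{-r}$ one gets $y^r (x y^k) y^{-r} = x y^{k-2r}$, and $x(x y^k) x^{-1} = x y^{-k}$ (conjugation by any $x y^j$ likewise alters the exponent by an even number), so the parity of the exponent of $y$ is the only conjugacy invariant; this gives the two $n$-element classes $\{x, x y^2, \dots, x y^{2n-2}\}$ and $\{x y, x y^3, \dots, x y^{2n-1}\}$. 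Tallying, there are $2 + (n-1) + 2 = n+3$ classes, and their sizes sum to $1 + 1 + 2(n-1) + n + n = 4n = |\mathbb{G}_n|$.

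The computation is routine; the only point demanding care --- and the one I would treat as the heart of the write-up --- is the combinatorial bookkeeping showing that each displayed set is a \emph{complete} conjugacy class (all of its elements mutually conjugate) rather than a union of smaller ones, and that the displayed sets are pairwise disjoint and exhaust $\mathbb{G}_n$. The class-size identity summing to $4n$ is precisely the certificate that no class has been split or merged, so I would close the argument with that count. I also note that this reproduces the same number of conjugacy classes as the dihedral group $D_{2n}$, consistent with the coincidence of character tables remarked on earlier in the paper.
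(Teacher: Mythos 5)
Your proposal is correct and follows essentially the same route as the paper: both compute $x^{-1}y^i x = y^{-i}$ to get the classes on $\langle y\rangle$, and both use conjugation by $x$ and by $y$ (shifting the exponent of $xy^k$ by an even amount) to identify the two parity classes on the coset $x\langle y\rangle$. Your write-up is merely more explicit — in particular the class-size count $1+1+2(n-1)+n+n=4n$ certifying completeness of each class is a worthwhile addition the paper omits.
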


\begin{proof} From $yxy=x$, we have $x^{-1}y^ix=y^{-i}=x^{2n-i}$. So we have the 
conjugacy classes in \eqref{item-conj1}. Then we calculate 
$$x^{-1}(xy^i)x=y^ix=xy^{2n-i}\quad\text{and}\quad y(xy^i)y^{-1}=yxy^{i-1}=xy^{i-2}.$$
So we get two conjugacy classes in \eqref{item-conj2}.
\end{proof}

\subsubsection{Irreducible representations}
This section was certainly known to Coxeter in \cite{MR1119304}.  
\color{black}
The dicyclic group $\text{Dic}_n$ is as double cover of the dihedral group: 
\begin{equation}\label{Dick}
\xymatrix{
 \{\pm 1\} \ar@{^{(}->}[r]^{y^n} 		&	\text{Dic}_n  \ar@{>>}[r] 	& D_n.  }
\end{equation}
\subsection{When $n$ is even}  We have $(n-2)/2$ inequivalent irreducible representations $\sigma_k:\text{Dic}_n\to \text{GL}_2(\mathbb C)$ given by 
\begin{align}
\sigma_k(a)=\begin{pmatrix} e^{\pi\imath k/n} & 0 \\ 0 & e^{- \pi \imath  k/n}\end{pmatrix} ,\quad 
\sigma_k(x)= \begin{pmatrix} 0 & 1 \\ 1 & 0\end{pmatrix}
\end{align}
where $k=1,2,\dots, (n-2)/2$.

By \propref{dicn} one knows that there are $n+3$ conjugacy classes and thus there are $n+3$ irreducible representations (see \cite[Theorem 7, page 19]{MR0450380}). 

There are $n/2$  irreducible representations of degree $2$ of the form
\begin{align}
\tau_k(a)=\begin{pmatrix} e^{\pi\imath k/n} & 0 \\ 0 & e^{- \pi \imath  k/n}\end{pmatrix} ,\quad 
\tau_k(x)= \begin{pmatrix} 0 & -1 \\ 1 & 0\end{pmatrix}
\end{align}
where $1\leq k\leq n$ is odd.

In addition there is the trivial representation $\omega_0$ and the representations
\begin{align*}
\omega_1(y)&=1,\qquad \omega_1(x)=-1, \\
\omega_2(y)&=-1,\qquad \omega_2(x)=1, \\
\omega_3(y)&=-1,\qquad \omega_3(x)=-1. 
\end{align*}

\subsection{When $n$ is odd} 
Again we describe $n+3$ irreducible representations over the complex numbers. 

We have $(n-1)/2$ inequivalent irreducible representations $\sigma_k:\text{Dic}_n\to \text{GL}_2(\mathbb C)$ given by 
\begin{align}
\sigma_k(a)=\begin{pmatrix} e^{\pi\imath k/n} & 0 \\ 0 & e^{- \pi \imath  k/n}\end{pmatrix} ,\quad 
\sigma_k(x)= \begin{pmatrix} 0 & 1 \\ 1 & 0\end{pmatrix}
\end{align}
where $k=1,2,\dots, (n-1)/2$.

There are $(n-1)/2$ other irreducible representations of degree $2$ of the form
\begin{align}
\gamma_k(a)=\begin{pmatrix} e^{\pi\imath k/n} & 0 \\ 0 & e^{- \pi \imath  k/n}\end{pmatrix} ,\quad 
\gamma_k(x)= \begin{pmatrix} 0 & -1 \\ 1 & 0\end{pmatrix}
\end{align}
where $k=1,2,3,\dots, n-2$.

In addition there is the trivial representation $\omega_0$ and the representations
\begin{align*}
\omega_1(y)&=-1,\quad \omega_1(x)=1, \\
\omega_2(y)&=-1,\quad \omega_1(x)=\imath, \\
\omega_3(y)&=-1,\quad \omega_2(x)=-\imath.
\end{align*}

\section{The decomposition of the space of K\"ahler differentials modulo exact forms for $D_{2k}$}\label{section:main-theorem}
Let $r=2n$, $R=R_2(p)$ and 
let $G:=\Aut(R)$ be the groups in Corollary~\ref{cor:automorphism-groups}. 
For $\phi\in G$ and $\overline{rds} \in \Omega_R/dR$,  
the action of $G$ on the K\"ahler differential is given by: 
\begin{equation}
\phi(\overline{rds}) = \overline{\phi(r)d\phi(s)}. \label{groupaction}
\end{equation}

First we include the following lemma for completeness. 

\begin{lemma}[\cite{MR3845909}, Lemma 7.1]
If $n$ is even, the character table is given as follows: 
\footnotesize
$$
M=\bordermatrix{\text{}&\rho_1&\rho_2&\rho_3 & \rho_4 & \chi_1 & \ldots &\chi_h & \ldots & \chi_{(n/2)-1} \cr
              1 &1 &  1  & 1 & 1 & 2 & \ldots & 2& \ldots & 2 \cr
               \psi & 1  &  -1& 1 & -1 & 0 &\ldots & 0 & \ldots & 0 \cr
                \psi \phi& 1 & -1 & -1& 1 & 0 &\ldots & 0 & \ldots & 0 \cr
                \phi & 1  &   1      &-1 & -1  & 2 \cos\left( \frac{2\pi}{ n}\right) &\ldots & 2\cos\left(\frac{2\pi h}{n}\right) & \ldots & 2\cos\left(\frac{2\pi \left((n/2)-1\right)}{n}\right)    \cr 
                \vdots & \vdots &  \vdots      &\vdots & \vdots  & \vdots & \ddots & \vdots & \ddots & \vdots  \cr 
                \phi^k& 1  &   1      &(-1)^k & (-1)^k   & 2 \cos\left( \frac{2\pi k}{ n}\right) &\ldots & 2\cos\left(\frac{2\pi hk}{n} \right) & \ldots & 2\cos\left(\frac{2\pi k \left((n/2)-1\right)}{n}\right)    \cr 
                \vdots  & \vdots  &   \vdots        &\vdots  & \vdots & \vdots & \ddots & \vdots & \ddots & \vdots \cr 
                \phi^{n/2} & 1  &   1      &(-1)^{n/2} & (-1)^{n/2}   & -2 &\ldots & 2(-1)^h& \ldots & 2(-1)^{(n/2)-1} \cr 
}.$$
So we have 
\begin{align*}
M^{-1} &= M^t \begin{pmatrix}
       \frac{1}{2n} &0 &  0  & 0 &    \ldots & 0& \ldots & \ldots & 0 \\
       0 &  \frac{n/2}{2n}& 0 & 0 &\ldots  & 0 & \ldots &  \ldots &0\\
       0& 0 &  \frac{n/2}{2n} & 0 &\ldots & 0 & \ldots &  \ldots & 0 \\
       0 & 0 & 0 &\frac{2}{2n} &\ldots & 0 & \ldots &  \ldots &0    \\ 
       \vdots & \vdots &\vdots & \vdots  & \ddots & \vdots  & \ddots & \vdots & \vdots  \cr 
       0 & 0 & 0 & 0 &\ldots &\frac{2}{2n}& \ldots & \vdots & 0  \\
       \vdots & \vdots &\vdots  & \vdots & \ddots & \vdots & \ddots & \vdots & \vdots \\
       0& 0 & 0 & 0 & \ldots & \vdots & \ldots &  \frac{2}{2n} & 0    \\
       0  & 0 & 0 & 0 & \ldots & 0& \ldots &  0& \frac{1}{2n} \\ 
\end{pmatrix}\\
&=\begin{pmatrix}
               \frac{1}{2n}  & \frac{1}{4} &  \frac{1}{4}  & \frac{1}{n} &    \ldots & \frac{1}{n}& \ldots &  \frac{1}{2n} \\
                \frac{1}{2n}   &  -\frac{1}{4}&- \frac{1}{4} &  \frac{1}{n} &  \ldots  & \frac{1}{n} & \ldots &  \frac{1}{2n} \\
                   \frac{1}{2n}  & \frac{1}{4} & -\frac{1}{4}& - \frac{1}{n}  &\ldots &  \frac{(-1)^k }{n}& \ldots &   \frac{(-1)^{n/2}}{2n}   \\
                \frac{1}{2n}  &   -\frac{1}{4}      & \frac{1}{4} & - \frac{1}{n} & \ldots  & \frac{(-1)^k}{n} & \ldots &\frac{(-1)^{n/2} }{2n}   \\ 
                \frac{1}{n}   &   0 & 0 &  \frac{2}{n} \cos\left( 2\pi /n\right)  &\ldots &  \frac{2}{n}\cos\left(2\pi h/n\right) & \ldots & -\frac{1}{n}   \\ 
                  \vdots &  \vdots      &\vdots & \vdots  & \ddots & \vdots & \ddots & \vdots\\
                   \frac{1}{n}   & 0 &0      & \frac{2}{ n}\cos\left(2\pi h/n\right)  &\ldots &  \frac{2}{n}\cos\left(2\pi hk /n\right) & \ldots & \frac{(-1)^h }{n} \\
                 \vdots  &   \vdots        &\vdots  & \vdots & \ddots &\vdots & \ddots & \vdots \\
                 \frac{1}{n}   &   0      & 0 &  \frac{2}{n}\cos\left(2\pi  \left((n/2)-1\right)/n\right)   &\ldots & \frac{2}{n}\cos\left(2\pi k \left((n/2)-1\right)/n\right)  &\ldots   & \frac{(-1)^{(n/2)-1}}{n} \\ 
\end{pmatrix}.  \\
\end{align*}

\end{lemma}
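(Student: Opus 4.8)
\emph{Approach.} The plan is to prove the two assertions of the lemma separately: that the displayed $M$ is the character table of $G$, and that $M^{-1}$ has the stated form. Throughout, realize $G$ by the presentation $\langle\phi,\psi\mid\phi^n=\psi^2=1,\ \psi\phi\psi^{-1}=\phi^{-1}\rangle$ with $n$ even, so $|G|=2n$.

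\emph{The character table.} First I would list the conjugacy classes: $\{1\}$, $\{\phi^{n/2}\}$, the pairs $\{\phi^j,\phi^{-j}\}$ for $1\le j\le(n/2)-1$, and the two reflection classes (one containing $\psi$ and all $\psi\phi^{2i}$, the other containing $\psi\phi$ and all $\psi\phi^{2i+1}$); there are $(n/2)+3$ of them. Then I would exhibit $(n/2)+3$ irreducibles and check irreducibility and pairwise inequivalence. Applying a one-dimensional character to the relation $\psi\phi\psi^{-1}=\phi^{-1}$ forces its value on $\phi$ to be $\pm1$ and its value on $\psi$ to be $\pm1$; since $n$ is even all four sign choices are consistent with $\phi^n=1$, giving $\rho_1,\dots,\rho_4$ as in the statement. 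For $1\le h\le(n/2)-1$, define $\chi_h$ on the generators by $\phi\mapsto\diag(\omega^h,\omega^{-h})$ and $\psi\mapsto\left(\begin{smallmatrix}0&1\\1&0\end{smallmatrix}\right)$, where $\omega=e^{2\pi\imath/n}$; these satisfy the relations, are irreducible because $\phi$ has distinct eigenvalues $\omega^{\pm h}$ so its only invariant lines are the coordinate axes, neither of which is $\psi$-invariant, and are mutually inequivalent because $\tr\chi_h(\phi)=2\cos(2\pi h/n)$ takes a different value for each $h$ in this range. The identity $4\cdot1^2+((n/2)-1)\cdot2^2=2n=|G|$ confirms the list is complete. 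Evaluating the characters on the representatives $1,\psi,\psi\phi,\phi,\phi^2,\dots,\phi^{n/2}$ then reproduces $M$: one uses $\chi_h(\phi^k)=\omega^{hk}+\omega^{-hk}=2\cos(2\pi hk/n)$ — so $\chi_h(\phi^{n/2})=2\cos(\pi h)=2(-1)^h$ — together with the fact that $\psi$ and $\psi\phi$ act by off-diagonal matrices in each $\chi_h$, hence with trace $0$.

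\emph{The inverse.} Row orthogonality of irreducible characters says $\sum_{g\in G}\overline{\chi_\rho(g)}\chi_{\rho'}(g)=|G|\delta_{\rho,\rho'}$, equivalently, grouping over conjugacy classes $C$ with representatives $c_C$, $\sum_C\tfrac{|C|}{|G|}\overline{\chi_\rho(c_C)}\chi_{\rho'}(c_C)=\delta_{\rho,\rho'}$. All characters of $G$ are real-valued, so the complex conjugate is harmless, and this is exactly the matrix identity $M^t\Delta M=I$, where $\Delta=\diag(|C|/|G|)$ with the conjugacy classes ordered as above; since $|\{1\}|=|\{\phi^{n/2}\}|=1$, each reflection class has $n/2$ elements, and each $\{\phi^{\pm j}\}$ has $2$ elements, this is $\Delta=\diag\bigl(\tfrac1{2n},\tfrac{n/2}{2n},\tfrac{n/2}{2n},\tfrac2{2n},\dots,\tfrac2{2n},\tfrac1{2n}\bigr)$ — the diagonal matrix displayed. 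Hence $M^{-1}=M^t\Delta$, which is the first equality. The second equality is the mechanical product: the $(\rho,C)$ entry of $M^t\Delta$ is $\tfrac{|C|}{|G|}\chi_\rho(c_C)$, and feeding in the entries of $M$ produces the explicit matrix — e.g. the first column is $\tfrac1{2n}\dim\rho$, the $\psi$ column is $\tfrac14\chi_\rho(\psi)$, and the last column is $\tfrac1{2n}\chi_\rho(\phi^{n/2})$, equal to $\tfrac{(-1)^{n/2}}{2n}$ on $\rho_3,\rho_4$ and to $\tfrac{(-1)^h}{n}$ on $\chi_h$.

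\emph{Expected difficulty.} There is no genuine obstacle, since every step is a standard application of Frobenius--Schur theory; the only point demanding attention is uniform bookkeeping — committing once to the order of the conjugacy classes and of the irreducibles and using it consistently in both $M$ and $\Delta$, and correctly tracking the parity-dependent signs $(-1)^k$, $(-1)^h$, $(-1)^{n/2}$ as the indices run up to their upper limits.
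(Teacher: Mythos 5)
Your proof is correct and follows essentially the same route the paper takes: the paper states this lemma without proof (citing Lemma 7.1 of the earlier work), but its proof of the odd-$n$ analogue immediately afterward uses exactly your argument, namely the standard character table of the dihedral group together with the first orthogonality relation $\sum_{C}\tfrac{|C|}{|G|}\chi_\pi(c_C)\overline{\chi_\rho(c_C)}=\delta_{\pi\rho}$ rewritten as $M^t\Delta M=I$. Your bookkeeping of conjugacy class sizes, the four sign characters, and the signs $(-1)^h$, $(-1)^{n/2}$ in the last column all check out against the displayed matrices.
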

\begin{lemma}
If $n$ is odd, the character table is given by the matrix 
\footnotesize
$$
M_{\mbox{odd}}=\bordermatrix{\text{}&\rho_1&\rho_2& \chi_1 & \ldots &\chi_h & \ldots & \chi_{(n-1)/2} \cr
              1 &1 &   1 & 2 & \ldots & 2& \ldots & 2 \cr
               \psi & 1  &  -1&  0 &\ldots & 0 & \ldots & 0 \cr
                \phi & 1 & 1 &  2 \cos\left(\frac{2\pi}{n} \right) &\ldots & 2\cos\left( \frac{2\pi h}{n}\right) & \ldots & 2\cos\left(\frac{2\pi \left( \frac{n-1}{2}\right)}{n} \right)    \cr 
                \vdots & \vdots & \vdots  & \vdots & \ddots & \vdots & \ddots & \vdots  \cr 
                \phi^k& 1 & 1 & 2 \cos\left( \frac{2\pi k} {n}\right) &\ldots & 2\cos\left( \frac{2\pi h k}{n} \right) & \ldots & 2\cos\left( \frac{2\pi k \left(\frac{n-1}{2}\right)}{n}\right)    \cr 
                \vdots  & \vdots  & \vdots & \vdots & \ddots & \vdots & \ddots & \vdots \cr 
                \phi^{(n-1)/2} & 1 & 1 & 2\cos\left(\frac{2\pi\frac{n-1}{2}}{n}\right) &\ldots & 2\cos\left(\frac{2\pi h\frac{n-1}{2}}{n}\right) & \ldots & 2\cos\left(\frac{2\pi \left(\frac{n-1}{2}\right)^2}{n} \right) \cr 
}.$$
So  
\begin{align*}
M_{\text{odd}}^{-1} &= M_{\text{odd}}^t \begin{pmatrix}
              \frac{1}{2n} &0 &  0  & 0 &    \ldots & 0& \ldots & \ldots & 0 \\
               0 & \frac{n}{2n}& 0 & 0 &\ldots  & 0 & \ldots &  \ldots &0\\
               0 & 0 & \frac{2}{2n} & 0 &\ldots & 0 & \ldots &  \ldots & 0 \\
               0 & 0 & 0 &\frac{2}{2n} &\ldots & 0 & \ldots &  \ldots & 0 \\ 
                  \vdots & \vdots &\vdots & \vdots  & \ddots &  \vdots & \ddots & \vdots & \vdots  \cr 
                  0 & 0 & 0 & 0 &\ldots &\frac{2}{2n}& \ldots & \ldots &0  \\
                 \vdots  &   \vdots &\vdots  & \vdots & \ddots &\vdots & \ddots &\vdots & \vdots \\
                0 & 0 & 0 & 0 & \ldots &\ldots &\ldots &  \frac{2}{2n} & 0    \\
                0 & 0 & 0 &  0 &  \ldots & 0& \ldots &  0& \frac{2}{2n} \\ 
\end{pmatrix}\\
&= 
\begin{pmatrix}
   \frac{1}{2n} & \frac{n}{2n} & \frac{2}{2n} &    \ldots & \frac{2}{2n}& \ldots &  \frac{2}{2n} \\
   \frac{1}{2n} & -\frac{n}{2n}&  \frac{2}{2n} &  \ldots  & \frac{2}{2n} & \ldots &  \frac{2}{2n} \\
   \frac{2}{2n} & 0 & \frac{2}{2n} \cos\left( \frac{2\pi}{n}\right)  &\ldots &  \frac{2}{2n}\cos\left( \frac{2\pi k}{n} \right) & \ldots & \frac{2}{2n} \cos\left( \frac{2\pi \frac{n-1}{2}}{n}\right)  \\ 
   \vdots & \vdots & \vdots  & \ddots &  & \vdots & \vdots\\
   \frac{2}{2n} & 0 & \frac{2}{ 2n}\cos\left(\frac{2\pi h}{n} \right)  &\ldots &  \frac{2}{2n}\cos\left(\frac{2\pi h k}{n} \right) & \ldots & \frac{2}{2n} \cos\left(\frac{2\pi h\frac{n-1}{2}}{n} \right)  \\
   \vdots  & \vdots & \vdots & \vdots & & \ddots & \vdots \\
   \frac{2}{2n} & 0 & \frac{2}{2n}\cos\left( \frac{2\pi  \left(\frac{n-1}{2}\right)}{n}\right)   &\ldots & \frac{2}{2n}\cos\left( \frac{2\pi  k \left(\frac{n-1}{2}\right)}{n}\right)  &\ldots   & \frac{2}{2n}\cos\left( \frac{2\pi  \left(\frac{n-1}{2}\right)^2}{n}\right) \\ 
\end{pmatrix}.  \\
\end{align*} 
\end{lemma}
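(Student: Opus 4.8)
\emph{Proof idea.} The plan is to identify the conjugacy classes and the irreducible characters of $D_n$ for $n$ odd, assemble them into $M_{\mathrm{odd}}$, and then read off $M_{\mathrm{odd}}^{-1}$ directly from the Schur orthogonality relations.

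First I would recall the structure of $D_n$ for $n$ odd, which by the proposition above on $\mathbb U_n$ (the case $\mathbb U_n\cong D_n$) is the relevant group. Writing $D_n=\langle x,y\mid x^2=y^n=1,\ xyx=y^{-1}\rangle$, it has exactly $\tfrac{n+3}{2}$ conjugacy classes: $\{1\}$, the single class $\{x,xy,\dots,xy^{n-1}\}$ of all $n$ reflections, and the $\tfrac{n-1}{2}$ two-element rotation classes $\{y^j,y^{-j}\}$ for $1\le j\le\tfrac{n-1}{2}$; in the notation of the lemma $\phi=y$ and $\psi=x$. Next I would list the irreducible representations: the trivial character $\rho_1$, the sign character $\rho_2$ with $\rho_2(y)=1$ and $\rho_2(x)=-1$, and for each $h$ with $1\le h\le\tfrac{n-1}{2}$ the two-dimensional representation $\chi_h$ sending $y$ to $\operatorname{diag}(\zeta^h,\zeta^{-h})$ and $x$ to the $2\times 2$ matrix that interchanges the two coordinates, where $\zeta$ is a primitive $n$-th root of unity. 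Then one checks the routine points: each $\chi_h$ respects the defining relations of $D_n$; each $\chi_h$ is irreducible, since the two eigenlines of $\chi_h(x)$ are swapped by $\chi_h(y)$ unless $\zeta^{2h}=1$, impossible for $h$ in this range; and the $\chi_h$ are pairwise inequivalent and distinct from $\rho_1,\rho_2$, because $\chi_h(y)=2\cos(2\pi h/n)$ takes distinct values for these $h$ and none equal to a one-dimensional value. Since $1^2+1^2+\tfrac{n-1}{2}\cdot 2^2=2n=|D_n|$ — equivalently, since this is a family of $\tfrac{n+3}{2}$ inequivalent irreducibles — it is the complete list.

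Evaluating traces would then fill in $M_{\mathrm{odd}}$: $\chi_h(1)=2$, $\chi_h(\psi)=0$ since the coordinate-swap matrix is traceless, and $\chi_h(\phi^k)=\zeta^{hk}+\zeta^{-hk}=2\cos(2\pi hk/n)$, which is exactly the stated table. For the inverse, the key point is that every character of $D_n$ is real, so the column-orthogonality relations $\sum_{g\in D_n}\chi_i(g)\chi_j(g)=|D_n|\,\delta_{ij}$, rewritten as a sum over conjugacy classes weighted by the class sizes $1,n,2,\dots,2$, say precisely that $M_{\mathrm{odd}}^{t}\,\Delta\,M_{\mathrm{odd}}=2n\,I$ with $\Delta=\operatorname{diag}(1,n,2,\dots,2)$. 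Hence $M_{\mathrm{odd}}^{-1}=\tfrac{1}{2n}M_{\mathrm{odd}}^{t}\Delta=M_{\mathrm{odd}}^{t}\cdot\operatorname{diag}\!\big(\tfrac{1}{2n},\tfrac{n}{2n},\tfrac{2}{2n},\dots,\tfrac{2}{2n}\big)$, which is the first displayed identity; carrying out that product — scaling the $c$-th column of $M_{\mathrm{odd}}^{t}$ by the $c$-th diagonal entry — gives the explicit matrix in the second display.

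I do not expect a genuine obstacle here: the representation theory involved (construction of the $\chi_h$, irreducibility, completeness of the list, orthogonality relations) is entirely standard, cf.\ \cite{MR0450380,MR1153249}, and the only new ingredient is the identification $\mathbb U_n\cong D_n$ for $n$ odd together with its conjugacy-class description established above. The one place demanding care is purely clerical: keeping the ordering of the conjugacy-class rows and of the irreducible-representation columns consistent between $M_{\mathrm{odd}}$, the diagonal matrix, and the final explicit form of $M_{\mathrm{odd}}^{-1}$.
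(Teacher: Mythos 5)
Your proposal is correct and follows essentially the same route as the paper: the paper simply quotes the character table of $D_n$ ($n$ odd) from Serre and then, exactly as you do, combines the orthogonality relations with the class sizes $1,n,2,\dots,2$ to get $M_{\text{odd}}^{-1}=\tfrac{1}{2n}M_{\text{odd}}^{t}\Delta$. The only difference is that you construct and verify the irreducible representations explicitly instead of citing them, which is a harmless elaboration.
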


\begin{proof}
The matrix $M_{odd}$ is the character table for $D_n$ when $n$ is odd. 
From page $38$ in \cite{MR0450380}, we obtain the character table for $n$ odd: 
\[ 
\vline 
\begin{array}{c|c|c}
\hline
 & (\phi_{\zeta}^+)^k   &  \psi_c^+ (\phi_{\zeta}^+)^k  \\  \hline 
\rho_1 & 1 & \:\:\: 1   \\    \hline 
\rho_2 &  1 &  -1 \\    \hline 
\chi_h &  2\cos\left(2\pi hk/n\right)  & \:\:\: 0  \\    \hline 
\end{array}
\vline 
\]  
where $1\leq h\leq (n-1)/2$, $\psi_c^+$ is a reflection, and $\phi_{\zeta}^+$ is a rotation. 

Letting $\Xi$ denote the set of conjugacy classes of the group $D_n$, we get the inverse matrix for $M_{odd}$ from the orthogonality of the characters of the irreducible representations. That is, one uses 
\begin{equation}\label{eqn:orthongalityofchar-n-odd}
\sum_{\{g\}\in\Xi }\frac{|\{g\}|}{|D_n|}\chi_\pi(g)\overline{\chi_\rho(g)}=\begin{cases} 1 
& \text{ for }\pi\cong \rho, \\
0 & \text{ otherwise},\end{cases}
\end{equation}
for any two irreducible representations $\pi$ and $\rho$ of $D_n$ 
(see page 260 in \cite{MR1695775}).
 
   The distinct conjugacy classes of $D_n$ (for $n$ odd) are: 
\[ 
\{\Inoindex \}, 
\{ (\phi_{\zeta}^+),  (\phi_{\zeta}^+)^{-1}\},    \ldots, 
\{ (\phi_{\zeta}^+)^i,  (\phi_{\zeta}^+)^{-i}\}, \ldots, 
\{ (\phi_{\zeta}^+)^{\widetilde{m}},  (\phi_{\zeta}^+)^{-\widetilde{m}}\}, 
\{ \psi_c^+ (\phi_{\zeta}^+)^k: 0\leq k <n\} 
\] 
since odd dihedral groups have trivial center. 
\end{proof}
 
So under an action by $G$, 
we decompose $\Omega_R/dR = Z(\widehat{\mathfrak{g}})$ 
into a direct sum of irreducible representations.  
Our goal in this section is to describe the module structure of $\Omega_R/dR$ into irreducibles under the action by $G$ for a particular $R_2(p)$. 

For completeness, we state the full result when the automorphism group is $D_n$, where $n\in \mathbb{N}$. 
\begin{theorem}\label{theorem-result-001}  
Let 
$p(t)=t(t-\alpha_1)\cdots (t-\alpha_{2n})$, where $\alpha_i$ are pairwise distinct. 
Assume $\sigma_c^{\pm}$ exists in $\Aut(\mathcal{R}_2(p))$ for some nonzero $c\in \mathbb{C}$, $c^{2n}=a_1$ and $k|n$.  

If $k$ is also even, then under the action of $D_{2k}$ the center decomposes as: 
\begin{equation} 
\Omega_R/dR \cong \mathbb{C} \omega_0 \oplus  \bigoplus_{i=3}^{4} U_i^{\frac{(1-(-1)^k)n}{2k} } \oplus \bigoplus_{h=1}^{k-1} V_{h}^{\oplus \frac{(1-(-1)^h)n}{k}}. 
\end{equation} 
where $U_i$, $i=1,2,3,4$ are the irreducible one dimensional representations for $D_{2k}$ with character $\rho_i$ and $V_h$ are the irreducible 2-dimensional representations for $D_{2k}$ with character $\chi_h$, $1\leq h\leq k-1$.  Note $\mathbb C\omega_0$ and $U_1$ are the trivial representations.

If $k$ is odd, then the center decomposes as: 
\begin{equation}
\Omega_R/dR \cong \mathbb{C}\omega_0 
\oplus \bigoplus_{i=3}^{4} U_i^{\oplus \Upsilon_i(\epsilon_i, \nu_i)} 
\oplus 
\bigoplus_{j=1}^{k-1} V_j^{\oplus \frac{(1-(-1)^j)n}{k}}, 
\end{equation}
where 
\begin{align*} 
\Upsilon_i(\epsilon_i,\nu_i) &= \frac{n}{2k} +\frac{\epsilon_i }{4}   \displaystyle{\sum_{i=n+3}^{2n}}
c^{\frac{n+3-2i}{2}}
  P_{i-n-3,-i} + \frac{\nu_i}{4}\displaystyle{\sum_{i=n+3}^{2n}}
\zeta^{\frac{2n+3-2i}{2}}
c^{\frac{n+3-2i}{2}}
  P_{i-n-3,-i} \\ 
\end{align*} 
with 
\begin{align*} 
\Upsilon_i(\epsilon_i,\nu_i) 
  &= \frac{(1-(-1)^k)n}{2k}(\delta_{i,3}+\delta_{i,4})+\frac{1-(-1)^n}{4}(\delta_{i,4}-\delta_{i,3})+\frac{1 }{2} (-1)^i  \displaystyle{\sum_{i=n+3}^{2n}}
c^{n+3-2i}
  P_{i-n-3,-i}.  
\end{align*} 

If the automorphism group is $D_n$, where $n$ is odd, then the center decomposes as  
\begin{equation} 
\Omega_R/dR \cong \mathbb{C} \omega_0 \oplus U_1^{\Xi_1} 
\oplus U_2^{\Xi_2} \oplus \bigoplus_{j=1}^{\frac{n-1}{2}} V_{j},  
\end{equation}
 where 
\begin{align*}
\Xi_1 = \frac{1}{2}- \frac{1}{2} \sum_{i=n+3}^{2n} c^{\frac{n+3-2i}{2}}P_{i-n-3,-i} 
\qquad 
\mbox{ and }  
\qquad 
\Xi_2 = \frac{3}{2}+ \frac{1}{2} \sum_{i=n+3}^{2n} c^{\frac{n+3-2i}{2}}P_{i-n-3,-i}, 
\end{align*}
$\mathbb{C} \omega_0$ 
is a $1$-dimensional irreducible representation, $U_i$ are pairwise distinct $1$-dimensional irreducible representations, and 
$V_j$ are pairwise distinct $2$-dimensional irreducible representations. 

When $n=2$, we have $\mathbb{C}\omega_2\oplus \mathbb{C}\omega_3$ and $\mathbb{C}\omega_1\oplus \mathbb{C}\omega_4$ as two $2$-dimensional irreducible representations of $D_2\times \mathbb{Z}_2$. 
 \end{theorem}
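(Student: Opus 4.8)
The strategy is a direct character computation followed by an application of the orthogonality relations. I would realize $\Omega_R/dR$ as a $D_n$-module in the explicit basis $\{\omega_0,\omega_1,\dots,\omega_{2n}\}$ of \eqref{omegaRmoddR}, evaluate its character $\chi$ on each conjugacy class of $D_n$, and then read off the multiplicities of the irreducibles from the inverse character table $M_{\mathrm{odd}}^{-1}$ computed above, that is, from the orthogonality relations \eqref{eqn:orthongalityofchar-n-odd}. Here $D_n=\langle\phi_\xi,\psi_c\rangle$, with $\phi_\xi$ the rotation of order $n$ of Corollary~\ref{cor:group-actions}\eqref{item:cor-cyclic-thm} (so $\xi$ is a primitive $n$th root of unity, $\phi_\xi(t)=\xi^2t$, $\phi_\xi(u)=\xi u$) and $\psi_c=\psi^+_c$ the reflection of Corollary~\ref{cor:group-actions}\eqref{cor:group-action-dihedral-maps-thm} with $c^{2n}=a_1$ and $\psi_c^2=\mathrm{id}$; both act on $\Omega_R/dR$ through \eqref{groupaction}, and since $n$ is odd all $n$ reflections lie in one conjugacy class. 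I will assume $n\ge 3$; the degenerate cases $n=1$ and (the even case) $n=2$ are handled separately by the same method.

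The first step is to compute the action on the basis. A short calculation with \eqref{groupaction} shows that $\phi_\xi$ is diagonal, $\phi_\xi(\omega_0)=\omega_0$ and $\phi_\xi(\omega_i)=\xi^{\,3-2i}\omega_i$ for $1\le i\le 2n$, while $\psi_c(\omega_0)=-\omega_0$ and $\psi_c(\omega_i)=-c^{\,n+3-2i}\,\overline{t^{\,i-n-3}u\,dt}$ for $1\le i\le 2n$. When $1\le i\le n+2$ one has $-2n\le i-n-3\le -1$, so $\overline{t^{\,i-n-3}u\,dt}=\omega_{n+3-i}$ and $\psi_c$ merely reverses these forms up to scalars, the unique fixed index being $i=(n+3)/2$ (an integer, as $n$ is odd), on which $\psi_c$ acts by $-1$. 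When $n+3\le i\le 2n$ one has $i-n-3\ge 0$ and must reduce $\overline{t^{\,i-n-3}u\,dt}$ to the basis: the point is that multiplying the relation $2u\,du=p'(t)\,dt$ in $\Omega_R$ by $t^m u$ and passing to $\Omega_R/dR$ (using $\overline{t^{\ell}\,du}=-\ell\,\overline{t^{\ell-1}u\,dt}$) yields $\sum_j a_j(2m+3j)\,\overline{t^{\,m+j-1}u\,dt}=0$, which after reindexing is exactly the recursion defining the polynomials $P_{k,i}$; together with the initial data $\overline{t^{-m}u\,dt}=\omega_m$ this gives $\overline{t^k u\,dt}=\sum_{m=1}^{2n}P_{k,-m}\,\omega_m$ for all $k\ge -2n$, so that the diagonal entry of $\psi_c$ at an index $i$ with $n+3\le i\le 2n$ is $-c^{\,n+3-2i}P_{i-n-3,-i}$.

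Next I would evaluate $\chi$ on the classes of $M_{\mathrm{odd}}$. One has $\chi(1)=2n+1$; on a rotation $\chi(\phi_\xi^{\,k})=1+\xi^{3k}\sum_{i=1}^{2n}(\xi^{-2k})^i$, which equals $1$ for $1\le k\le (n-1)/2$ because $\xi^{-2k}$ is then a nontrivial $n$th root of unity and the index runs over two full periods; and summing the diagonal entries of $\psi_c$ found above gives $\chi(\psi_c)=-2-S$, where $S:=\sum_{i=n+3}^{2n}c^{\,n+3-2i}P_{i-n-3,-i}$ is precisely the polynomial sum occurring in $\Xi_1$ and $\Xi_2$. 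Feeding these values into \eqref{eqn:orthongalityofchar-n-odd}, with the class sizes $1,2,\dots,2,n$, gives
\begin{equation*}
\langle\chi,\rho_1\rangle=\frac{(2n+1)+(n-1)+n(-2-S)}{2n}=\frac{1-S}{2},\qquad
\langle\chi,\rho_2\rangle=\frac{(2n+1)+(n-1)-n(-2-S)}{2n}=\frac{5+S}{2},
\end{equation*}
and, using $\chi_h(\psi_c)=0$ together with $\sum_{k=1}^{(n-1)/2}\cos(2\pi hk/n)=-\tfrac12$,
\begin{equation*}
\langle\chi,\chi_h\rangle=\frac{2(2n+1)+4\sum_{k=1}^{(n-1)/2}\cos(2\pi hk/n)}{2n}=2\qquad(1\le h\le\tfrac{n-1}{2}).
\end{equation*}
Since $\phi_\xi(\omega_0)=\omega_0$ and $\psi_c(\omega_0)=-\omega_0$, the line $\mathbb{C}\omega_0$ is a copy of the sign representation $\rho_2$; removing it leaves $\tfrac{5+S}{2}-1=\tfrac{3+S}{2}=\Xi_2$ copies of the remaining sign summand $U_2$, while $\langle\chi,\rho_1\rangle=\tfrac{1-S}{2}=\Xi_1$ and each $V_j=\chi_j$ occurs with multiplicity $2$. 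This is the asserted decomposition, and the dimensions agree: $1+\Xi_1+\Xi_2+2\cdot 2\cdot\tfrac{n-1}{2}=1+2+2(n-1)=2n+1$. The two $D_{2k}$ cases of the theorem are \cite[Theorem~7.2]{MR3845909}.

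The step I expect to be the main obstacle is the reduction in the second paragraph — identifying the classes $\overline{t^k u\,dt}$ for $k\ge 0$ with $\sum_m P_{k,-m}\omega_m$ via the $P$-recursion and its initial data — together with the careful bookkeeping of which basis vectors actually contribute a diagonal term to $\psi_c$, namely $\mathbb{C}\omega_0$, the self-paired index $(n+3)/2$, and the indices $n+3\le i\le 2n$; these three contributions are exactly what produce the constant $-2$ and the sum $S$ in $\chi(\psi_c)$, hence the precise values of $\Xi_1$ and $\Xi_2$. The remaining computations are routine finite-group character arithmetic of the kind used by Serre and by Fulton and Harris.
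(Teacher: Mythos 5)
Your proposal is correct and follows essentially the same route as the paper: compute the matrices of $\phi_\xi$ and $\psi_c$ on the basis $\{\omega_i\}$ (using the reduction $\overline{t^{k}u\,dt}=\sum_m P_{k,-m}\omega_m$ for the indices $n+3\le i\le 2n$), evaluate the character on the conjugacy classes, and invert the $D_n$ character table to obtain $\Xi_1=\tfrac{1-S}{2}$, $\Xi_2=\tfrac{3+S}{2}$, and multiplicity $2$ for each $V_j$. The only cosmetic differences are that you work on the full $(2n+1)$-dimensional space and then peel off $\mathbb{C}\omega_0\cong\rho_2$, whereas the paper restricts to the complement of $\mathbb{C}\omega_0$ from the start, and you rederive the $P_{k,i}$-recursion from $2u\,du=p'(t)\,dt$ rather than citing it from the earlier work.
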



\begin{remark}
If $\sigma_c^{\pm}$ does not exist, then we obtain the automorphism group being $C_{2k}$, and the decomposition is done in Theorem $7.2. (1)$ in \cite{MR3845909}. 
\end{remark}

\begin{corollary}\label{cor:honest-2-dimensional-irreps}
If $\overline{\omega_i}=c^{-\frac{n+3-2i}{2}}\zeta^{-\frac{i}{2}}\omega_i$ for $1\leq i\leq n+2$, then 
$\overline{\omega_i}$ and 
$\overline{\omega_{n+3-i}}$  
span a $2$-dimensional irreducible representation. 
 \end{corollary}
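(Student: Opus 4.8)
The plan is to feed the explicit automorphism formulas into the action \eqref{groupaction} and read off a $2\times 2$ matrix representation that is visibly irreducible. Throughout I use the dihedral generators of $G=D_n$ from \corref{polycoefficients} in the case $a_1=c^{2n}$, namely $\phi_\zeta(t)=\zeta t$, $\phi_\zeta(u)=\zeta^{1/2}u$ (a fixed square root being chosen) and $\psi_c(t)=c^2t^{-1}$, $\psi_c(u)=c^{n+1}t^{-n-1}u$; the $\psi_c^-$ choice changes only a global sign below. Note that because $n$ is odd the integer $n+3-2i$ is even, so $c^{-\frac{n+3-2i}{2}}$ is unambiguous and the scalar normalizing $\overline{\omega_i}$ involves no square root of $c$. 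First I would apply \eqref{groupaction} to $\omega_i=\overline{t^{-i}u\,dt}$. The key point is that for $1\le i\le n+2$ one has $1\le n+3-i\le 2n$, so $\psi_c$ carries $\omega_i$ to a scalar multiple of the \emph{honest} basis vector $\omega_{n+3-i}$ with no reduction modulo $u^2=p(t)$; this is precisely what fails for $i\ge n+3$ and is what produces the $P_{k,i}$-tails in $\Xi_1,\Xi_2$. A short computation, in which the powers of $c$ cancel exactly when the normalization is $c^{-\frac{n+3-2i}{2}}$, yields
\[
\phi_\zeta(\overline{\omega_i})=\zeta^{\frac{3-2i}{2}}\,\overline{\omega_i},
\qquad
\psi_c(\overline{\omega_i})=-\,\zeta^{\frac{n+3-2i}{2}}\,\overline{\omega_{n+3-i}},
\]
and, interchanging $i\leftrightarrow n+3-i$, $\psi_c(\overline{\omega_{n+3-i}})=-\zeta^{\frac{2i-n-3}{2}}\,\overline{\omega_i}$.

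From these formulas $W:=\mathbb C\overline{\omega_i}\oplus\mathbb C\overline{\omega_{n+3-i}}$ is immediately a $G$-submodule: $\phi_\zeta$ acts diagonally on it and $\psi_c$ anti-diagonally, the two off-diagonal entries multiplying to $\zeta^0=1$ in accordance with $\psi_c^2=\mathrm{id}$. (One tacitly takes $i\ne\tfrac{n+3}{2}$, the fixed point of $i\mapsto n+3-i$; for that value $\overline{\omega_i}=\overline{\omega_{n+3-i}}$ and the span is one-dimensional, so the statement is understood for $i\ne\tfrac{n+3}{2}$.) For irreducibility I would observe that the two eigenvalues of $\phi_\zeta$ on $W$ have ratio $\zeta^{\,n+3-2i}=\zeta^{\,3-2i}$ (using $\zeta^n=1$), which is $\ne 1$ since $2$ is invertible modulo $n$ and, in the range considered, $2i\equiv 3\pmod n$ forces $i=\tfrac{n+3}{2}$. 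Hence $\phi_\zeta$ has exactly the two stable lines $\mathbb C\overline{\omega_i}$ and $\mathbb C\overline{\omega_{n+3-i}}$ in $W$, and $\psi_c$ interchanges them, so $W$ has no $G$-stable line; a two-dimensional module with no stable line is irreducible. Comparing the eigenvalue $\zeta^{\frac{3-2i}{2}}$ of $\phi_\zeta$ with the character table of $D_n$ then identifies $W$ with one of the representations $V_j$ of \thmref{theorem-result-001}.

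The only real work, and the part that needs care, is the arithmetic bookkeeping: verifying that every power of $c$ cancels (which pins the normalization to $c^{-\frac{n+3-2i}{2}}$), interpreting the half-integer powers of $\zeta$ consistently through the fixed square root $\zeta^{1/2}$, and checking that the two $\phi_\zeta$-eigenvalues are genuinely distinct. Each of these rests on $n$ being odd — evenness of $n+3-2i$ and invertibility of $2$ modulo $n$ — and no input beyond the already-established action of $\Aut(R_2(p))$ on $\Omega_R/dR$ is required.
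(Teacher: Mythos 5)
Your proof is correct and follows essentially the same route as the paper: normalize the basis so that the powers of $c$ cancel, compute the diagonal matrix of $\phi_\zeta$ and the anti-diagonal matrix of $\psi_c$ on $\Span\{\overline{\omega_i},\overline{\omega_{n+3-i}}\}$, and recognize the standard two-dimensional dihedral representation. You go slightly further than the paper by explicitly verifying irreducibility (the two $\phi_\zeta$-eigenvalues are distinct away from $i=\tfrac{n+3}{2}$ and the corresponding lines are swapped by $\psi_c$) and by flagging the degenerate index $i=\tfrac{n+3}{2}$, both of which the paper leaves implicit.
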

  If $n$ is even, then $1\leq i\leq (n+2)/2$ in Corollary~\ref{cor:honest-2-dimensional-irreps}.  

 We will now prove Theorem~\ref{theorem-result-001}.  
\begin{proof} 

 When $n$ is even, the proof is given on pages $85$-$90$ in \cite{MR3845909}. 

 For $n$ odd, we have 
$\mathbb{C}\omega_i\oplus \mathbb{C}\omega_{n+3-i}$ forming $2$-dimensional irreducible $D_{n}$-representations for $1\leq i\leq \frac{n+1}{2}$ since similar to the case when $n$ is even, we have 
\begin{equation}
\phi_{\zeta}^+|_{\{\omega_i,\omega_{n+3-i}\}} = 
\left( 
\begin{array}{cc}
\zeta^{\frac{2n+3-2i}{2}} & 0 \\ 
0 & \zeta^{\frac{2i-3}{2}}  \\ 
\end{array}
\right)  
\mbox{ and } 
\psi_c^+ |_{\{\omega_i,\omega_{n+3-i}\}}
= 
\left( 
\begin{array}{cc} 
0&  -c^{-\frac{n+3-2i}{2}} \\ 
-c^{n+3-2i} & 0 \\ 
\end{array}
\right), 
\end{equation}
  where $\tr(\phi_{\zeta}^+|_{\{\omega_i,\omega_{n+3-i}\}}) = \zeta^{n} $ and 
$\tr(\psi_c^+ |_{\{\omega_i,\omega_{n+3-i}\}})=0$. 
 It follows from Corollary~\ref{cor:honest-2-dimensional-irreps} that we indeed have $2$-dimensional irreducible representations.

Furthermore, since  
 \begin{equation}
\psi_c^+(\omega_{\frac{n+3}{2}}) = -\omega_{\frac{n+3}{2}} 
\hspace{4mm} 
	\mbox{ and } 
\hspace{4mm}
 \phi_{\zeta}^+(\omega_{\frac{n+3}{2}}) 
 = - \omega_{\frac{n+3}{2}}, 
 \end{equation}
$\omega_{\frac{n+3}{2}}$ is a basis vector for a one-dimensional sign representation $\mathbb{C}\omega_{\frac{n+3}{2}}$ under the action of $D_n$ (for $n$ odd).

Similar to the proof of Theorem $7.2$ in \cite{MR3845909}, 
we have the automorphisms 
\begin{equation}\label{eqn:phi-zeta-plus-n-odd}
\phi_{\zeta}^+(\omega_i) 
= \zeta^{\frac{2n+3-2i}{2}} \omega_i    \mbox{ for } n+3 \leq i\leq 2n
\end{equation} 
and 
\begin{equation}\label{eqn:map-psi-n-odd}
\psi_c^+(\omega_{i})=-c^{n+3-2i}\omega_{n+3-i}, \mbox{ where } n+3\leq i\leq 2n. 
\end{equation} 
Since $\omega_{n+3-i}=\overline{t^{i-n-3}u\,dt}$, 
\[ 
\overline{t^{i-n-3}u\,dt} = \sum_{k=1}^{2n}P_{i-n-3,-k}\omega_k.  
\] 
Thus for $n+3\leq i\leq 2n$, we have 
\[
\psi_{c}^+(\omega_i) = -c^{n+3-2i}  \sum_{k=1}^{2n}P_{i-n-3,-k}\omega_k. 
\]

 
The trace of $\phi_{\zeta}^+$ in basis $\{ \omega_1,\ldots,\omega_{2n}\}$ is 
$$
\tr(\phi_{\zeta}^+)= \sum_{i=1}^{2n}  \zeta^{\frac{2n+3-2i}{2}}  
$$
while the trace of $\psi_{c}^+$ with respect to $\{ \omega_1,\ldots,\omega_{2n}\}$ is  
\begin{align*}
\tr(\psi_c^+) = -1 - \sum_{i=n+3}^{2n} c^{n+3-2i}P_{i-n-3,-i}. 
\end{align*}

For $n$ odd, the system of equations we need to solve is 
  $$
  \chi_{(\Omega_R/dR)/\mathbb C\omega_0}=n_1\rho_1+n_2\rho_2+\sum_{h=1}^{\frac{n-1}{2}}m_h\chi_h, 
  $$   
  which are precisely,  
   \begin{align*}  
  2n&=n_1 +n_2   +\sum_{h=1}^{\frac{n-1}{2}}2m_h, \\  
 \sum_{i=1}^{2n}  \zeta^{\frac{(2n+3-2i)k}{2}}   &=n_1 +n_2 +\sum_{h=1}^{\frac{n-1}{2}}2m_h\cos (2\pi hk/n)\quad \mbox{ for }  
    1\leq k\leq \widetilde{m} = (n-1)/2,  \\
-1 - \sum_{i=n+3}^{2n} c^{n+3-2i}P_{i-n-3,-i} 
		&=n_1 -n_2 \quad\text{ for }\psi_c^+. \\ 
  \end{align*}
 The sum $\displaystyle{\sum_{i=1}^{2n}  \zeta^{\frac{(2n+3-2i)k}{2}}}$ simplifies as: 
 \begin{align*}
 \sum_{i=1}^{2n}  \zeta^{\frac{(2n+3-2i)k}{2}}&= \zeta^{\frac{3}{2}k}\sum_{i=1}^{2n}  \zeta^{ (n-i)k }= \zeta^{\frac{3}{2}k}\left( \zeta^{ (n-1)k }+\zeta^{ (n-2)k }+\ldots +\zeta^k+1+\zeta^{-k}+\ldots+ \zeta^{-2nk}\right)=0. 
 \end{align*}

Using Equation~\eqref{eqn:dihedral-setting-p-in-k-and-n}, 
we see that $3n-2i+4=2n-qk+1$, where $n+3\leq i\leq \frac{3(n+1)}{2}$ and $0\leq q\leq 2n/k$.  
The equation simplifies as $n+3-2i=-qk$. Since $n+3-2i$ is even, $qk$ is also even. 

So the above set of equations can be rewritten as: 
\begin{align*} 
M_{odd}  
\begin{pmatrix} 
n_1 \\ 
n_2 \\ 
m_1 \\ 
m_2 \\ 
\vdots \\ 
m_h \\ 
\vdots \\ 
m_{(n-1)/2} 
\end{pmatrix} 
= 
\begin{pmatrix}
2n \\ 
-1 - \displaystyle{\sum_{i=n+3}^{2n}} c^{n+3-2i}P_{i-n-3,-i} \\ 
  0 \\ 
  0 \\ 
  \vdots \\ 
  0 \\  
  0 \\ 
\end{pmatrix}. 
\end{align*} 
So 
\begin{align*} 
\begin{pmatrix} 
n_1 \\ 
n_2 \\ 
m_1 \\ 
m_2 \\ 
\vdots \\ 
m_h \\ 
\vdots \\ 
m_{(n-1)/2} 
\end{pmatrix}  
&= 
\tiny 
\begin{pmatrix}
   \frac{1}{2n} & \frac{n}{2n} & \frac{2}{2n} &    \ldots & \frac{2}{2n}& \ldots &  \frac{2}{2n} \\
   \frac{1}{2n} & -\frac{n}{2n}&  \frac{2}{2n} &  \ldots  & \frac{2}{2n} & \ldots &  \frac{2}{2n} \\
   \frac{2}{2n} & 0 & \frac{2}{2n} \cos\left( \frac{2\pi}{n}\right)  &\ldots &  \frac{2}{2n}\cos\left( \frac{2\pi k}{n} \right) & \ldots & \frac{2}{2n} \cos\left( \frac{2\pi \frac{n-1}{2}}{n}\right)  \\ 
   \vdots & \vdots & \vdots  & \ddots &  & \vdots & \vdots\\
   \frac{2}{2n} & 0 & \frac{2}{ 2n}\cos\left(\frac{2\pi h}{n} \right)  &\ldots &  \frac{2}{2n}\cos\left(\frac{2\pi h k}{n} \right) & \ldots & \frac{2}{2n} \cos\left(\frac{2\pi h\frac{n-1}{2}}{n} \right)  \\
   \vdots  & \vdots & \vdots & \vdots & & \ddots & \vdots \\
   \frac{2}{2n} & 0 & \frac{2}{2n}\cos\left( \frac{2\pi  \left(\frac{n-1}{2}\right)}{n}\right)   &\ldots & \frac{2}{2n}\cos\left( \frac{2\pi  k \left(\frac{n-1}{2}\right)}{n}\right)  &\ldots   & \frac{2}{2n}\cos\left( \frac{2\pi  \left(\frac{n-1}{2}\right)^2}{n}\right) \\ 
\end{pmatrix} 
\normalsize 
\begin{pmatrix}
2n \\ 
\Xi \\ 
  0 \\ 
  0 \\ 
  \vdots \\  
  0 \\ 
  0 \\ 
\end{pmatrix},  
\end{align*}
where $\Xi = -1 - \displaystyle{\sum_{i=n+3}^{2n}} c^{n+3-2i}P_{i-n-3,-i}$, 
which we conclude 
\begin{align*}
\begin{pmatrix} 
n_1 \\ 
n_2 \\ 
m_1 \\ 
m_2 \\ 
\vdots \\ 
m_h \\ 
\vdots \\ 
m_{(n-1)/2} 
\end{pmatrix} 
= 
\begin{pmatrix}
\frac{1}{2}- \frac{1}{2}\displaystyle{\sum_{i=n+3}^{2n}} c^{n+3-2i}P_{i-n-3,-i}\\ 
\frac{3}{2}+ \frac{1}{2} \displaystyle{\sum_{i=n+3}^{2n}} c^{n+3-2i}P_{i-n-3,-i}\\  
2 \\ 
2 \\   
\vdots \\   
2 \\  
\end{pmatrix}.  \\  
\end{align*} 
 
Now, let $n=2$. Then under the action by $D_2\times \mathbb{Z}_2$, we have: 
\[
\begin{aligned}
\psi_c^+(\omega_1) = -c^{\frac{3}{2}}\omega_4,  \qquad  
\psi_c^+(\omega_2) = -c^{\frac{1}{2}}\omega_3,  \qquad 
\psi_c^+(\omega_3) = -c^{-\frac{1}{2}}\omega_2, \qquad  
\psi_c^+(\omega_4) = -c^{-\frac{3}{2}}\omega_1.  
\end{aligned}
\]  
This implies with respect to the ordered basis $\{ \omega_2, \omega_3\}$ and \{$\omega_1,\omega_4 \}$, the matrix for $\psi_c^+$ is given by: 
\[ 
\psi_c^+ \Big|_{\{ \omega_2,\omega_3\}} = 
\left( 
\begin{array}{cc}
0 & -c^{-\frac{1}{2}}  \\ 
-c^{\frac{1}{2}} & 0 \\ 
\end{array}
\right) 
\qquad 
\mbox{ and } 
\qquad  
\psi_c^+ \Big|_{\{ \omega_1,\omega_4\}} = 
\left( 
\begin{array}{cc} 
0 & -c^{-\frac{3}{2}} \\ 
 -c^{\frac{3}{2}}& 0 \\ 
\end{array}
\right),  
\] 
respectively.

Under the action by $\phi_{\zeta}^+$, we have: 
\[
\begin{aligned} 
\phi_{\zeta}^+(\omega_1) = \zeta^{\frac{5}{2}}\omega_1,  \qquad 
\phi_{\zeta}^+(\omega_2) = \zeta^{\frac{3}{2}}\omega_2,  \qquad  
\phi_{\zeta}^+(\omega_3) = \zeta^{\frac{1}{2}}\omega_3,  \qquad  
\phi_{\zeta}^+(\omega_4) = \zeta^{-\frac{1}{2}}\omega_4.   
\end{aligned}
\] 
So  
\[ 
\phi_{\zeta}^+ \Big|_{\{ \omega_2,\omega_3\}} = 
\left( 
\begin{array}{cc}
\zeta^{\frac{3}{2}} & 0  \\
0& \zeta^{\frac{1}{2}}  \\
\end{array}
\right) 
\qquad 
\mbox{ and } 
\qquad 
\phi_{\zeta}^+ \Big|_{\{ \omega_1,\omega_4\}} = 
\left( 
\begin{array}{cc}
 \zeta^{\frac{5}{2}} &  0 \\
0 & \zeta^{-\frac{1}{2}}  \\
\end{array}
\right). 
\]

Using the representation theory of the dihedral group $D_2$ of order $4$, we see that 
$\mathbb{C}\omega_2\oplus \mathbb{C}\omega_3$ and $\mathbb{C}\omega_1\oplus \mathbb{C}\omega_4$ are two $2$-dimensional irreducible representations of $D_2\times \mathbb{Z}_2$. 
\end{proof}

 We will now prove Corollary~\ref{cor:honest-2-dimensional-irreps} for completeness.

 \begin{proof}
For $n$ even, we change the basis to $\overline{\omega_i}= c^{-\frac{n+3-2i}{2}}\zeta^{-\frac{i}{2}}\omega_i\mbox{ for } 
1\leq i\leq n+2$ to see that we have $2$-dimensional irreducible representations. 
Since 
$$
\overline{\omega_{n+3-i}} = c^{\frac{n+3-2i}{2}}\zeta^{-\frac{n+3-i}{2}} \omega_{n+3-i} \quad \mbox{ for } 1\leq i\leq n+2, 
$$ 
we have 
\begin{enumerate}
\item 
$\phi_{\zeta}^+(\overline{\omega_i}) = c^{-\frac{n+3-2i}{2}}\zeta^{-\frac{2i}{4}}\zeta^{\frac{2n+3-2i}{2}}\omega_i 
= \zeta^{\frac{2n+3-2i}{2}}\overline{\omega_i}$, 
\item 
$\phi_{\zeta}^+(\overline{\omega_{n+3-i}}) 
= c^{\frac{n+3-2i}{2}}\zeta^{-\frac{n+3-i}{2}}\zeta^{\frac{-3+2i}{4}}\omega_{n+3-i}
= \zeta^{-\frac{2n+3-2i}{2}}\overline{\omega_{n+3-i}}
$, 
\item $\psi_{c}^+(\overline{\omega_i}) 
=-c^{-\frac{n+3-2i}{2}}\zeta^{-\frac{2i}{4}} c^{n+3-2i}\omega_{n+3-i}
=\zeta^{\frac{2n+3-2i}{2}}\overline{\omega_{n+3-i}}$, 
\item $\psi_{c}^+(\overline{\omega_{n+3-i}}) =-
c^{\frac{n+3-2i}{2}}\zeta^{-\frac{n+3-i}{2}}
c^{-\frac{n+3-2i}{2}}\omega_{i} 
= \zeta^{-\frac{2n+3-2i}{2}}\overline{\omega_i}
$. 
\end{enumerate}
With respect to the basis $\overline{\omega_1},\ldots, \overline{\omega_{n+2}}$, 
we obtain 
\[ 
\phi_{\zeta}^+\Big|_{\{ \overline{\omega_i},\overline{\omega_{n+3-i}}
\}} 
= \begin{pmatrix}
\zeta^{\frac{2n+3-2i}{2}} & 0 \\ 
0 & \zeta^{-\frac{2n+3-2i}{2}} \\ 
\end{pmatrix}
\quad \mbox{ and }\quad 
\psi_c^+ \Big|_{\{ \overline{\omega_i},\overline{\omega_{n+3-i}}
\}} = 
\begin{pmatrix}
0 & \zeta^{-\frac{2n+3-2i}{2}}\\ 
\zeta^{\frac{2n+3-2i}{2}} & 0 \\ 
\end{pmatrix}, 
\] 
which coincide with classical $2$-dimensional dihedral group irreducible representations.

For $n$ odd,  with respect to the basis $\overline{\omega_i} = c^{-\frac{n+3-2i}{4}}\zeta^{-\frac{i}{2}}\omega_i \mbox{ for } 1\leq i\leq n+2$,  we have 
\[ 
\phi_{\zeta}^+\Big|_{\{ \overline{\omega_i},\overline{\omega_{n+3-i}}
\}} = 
\begin{pmatrix}
\zeta^{\frac{2n+3-2i}{2}} & 0 \\ 
0 & \zeta^{-\frac{2n+3-2i}{2}} \\ 
\end{pmatrix}
\quad \mbox{ and }\quad 
\psi_c^+ \Big|_{\{ \overline{\omega_i},\overline{\omega_{n+3-i}}
\}} = 
\begin{pmatrix}
0 & \zeta^{-\frac{2n+3-2i}{2}}\\ 
\zeta^{\frac{2n+3-2i}{2}} & 0 \\ 
\end{pmatrix}, 
\]  
with 
 \begin{equation}
\psi_c^+(\overline{\omega_{\frac{n+3}{2}}})   
= -\overline{\omega_{\frac{n+3}{2}}}  
\quad
	\mbox{ and } 
\quad 
 \phi_{\zeta}^+(\overline{\omega_{\frac{n+3}{2}}}) 
 = \zeta^{n/2} \overline{\omega_{\frac{n+3}{2}}}
 = - \overline{\omega_{\frac{n+3}{2}}}.  
 \end{equation}
 \end{proof}

The following examples are also in \cite{MR3845909} but we state them here to provide examples for when $n$ is odd. 
\begin{example}
Let $n=3$ and $k=3$. Then $p(t)=t (t^3 -\alpha_1^3) (t^3 - \alpha_2^3)$, with 

\begin{align*}
\psi_{c}^{+}  = 
\begin{pmatrix}
0 & 0& 0& 0& -\frac{1}{c^2} & 0 \\ 
0& 0& 0& -\frac{1}{c} & 0  & 0 \\ 
0& 0&  -1 & 0 & 0 & 0 \\ 
0&  -c& 0 & 0 & 0 & 0 \\ 
 - c^2& 0& 0 & 0 & 0 & 0 \\ 
0& 0& 0 & 0 & 0 & - \frac{\alpha_1^3\alpha_2^3}{c^6} \\ 
\end{pmatrix}.
\end{align*}
In this case, $\displaystyle{\alpha_1^3\alpha_2^3=c^6}$, and the trace of $\psi_{c}^{+}$ is $-2$. This gives us multiplicities 
\begin{align*}
n_1 = n_2=n_3=0, \qquad n_4 = 2, \qquad m_1 = 2. 
\end{align*}
\end{example}

\begin{example}

When $n=9$ and $k=3$, we obtain $\tr(\psi_c^+)=-2=-\tr(\psi_c^+\phi_\xi^+)$, 
and hence
$$
n_1=n_2=0,\qquad n_3=2,\qquad  n_4=4,\qquad  m_1=6,\qquad  m_2=0.
$$

\end{example}

\noindent {\bf Acknowledgments.}  B.C. is partially supported by a collaboration grant from the Simons Foundation $\#319261$. 
X.G. is partially supported by NSF of China
(Grants $11101380$, $11471294$) and the Foundation for Young Teachers of
Zhengzhou University (Grant $1421315071$).   K.Z. is partially
supported by NSF of China (Grant $11271109$) and NSERC.


\begin{thebibliography}{CGLZ17}

\bibitem[Arb00]{Arbogast}
L.~F.~A. Arbogast, \emph{Du calcul des derivations}.

\bibitem[Bel28]{MR1502817}
E.~T. Bell, \emph{Partition polynomials}, Ann. of Math. (2) \textbf{29}
  (1927/28), no.~1-4, 38--46.

\bibitem[BGG93]{MR1223022}
E.~Bujalance, J.~M. Gamboa, and G.~Gromadzki, \emph{The full automorphism
  groups of hyperelliptic {R}iemann surfaces}, Manuscripta Math. \textbf{79}
  (1993), no.~3-4, 267--282.

\bibitem[Blo81]{MR618298}
Spencer Bloch, \emph{The dilogarithm and extensions of {L}ie algebras},
  Algebraic {$K$}-theory, {E}vanston 1980 ({P}roc. {C}onf., {N}orthwestern
  {U}niv., {E}vanston, {I}ll., 1980), Lecture Notes in Math., vol. 854,
  Springer, Berlin-New York, 1981, pp.~1--23.

\bibitem[Bre94]{MR1303073}
Murray Bremner, \emph{Universal central extensions of elliptic affine {L}ie
  algebras}, J. Math. Phys. \textbf{35} (1994), no.~12, 6685--6692.

\bibitem[Bre00]{MR1796706}
Thomas Breuer, \emph{Characters and automorphism groups of compact {R}iemann
  surfaces}, London Mathematical Society Lecture Note Series, vol. 280,
  Cambridge University Press, Cambridge, 2000. \MR{1796706 (2002i:14034)}
  
\bibitem[Cox91]{MR1119304}
H. S. M. Coxeter, \emph{Regular complex polytopes}, second ed., Cambridge University Press, Cambridge, 1991. 

\bibitem[CF11]{MR2813377}
Ben Cox and Vyacheslav Futorny, \emph{D{JKM} algebras {I}: their universal
  central extension}, Proc. Amer. Math. Soc. \textbf{35} (2011), no.~10,
  3451--3460.

\bibitem[CGLZ14]{MR3211093}
Ben Cox, Xiangqian Guo, Rencai Lu, and Kaiming Zhao, \emph{{$n$}-point
  {V}irasoro algebras and their modules of densities}, Commun. Contemp. Math.
  \textbf{16} (2014), no.~3, 1350047, 27. \MR{3211093}

\bibitem[CGLZ17]{MR3631928}
Ben Cox, Xiangqian Guo, Rencai Lu, and Kaiming Zhao, \emph{Simple superelliptic
  {L}ie algebras}, Commun. Contemp. Math. \textbf{19} (2017), no.~3, 1650032,
  22.

\bibitem[Cox08]{MR2373448}
Ben Cox, \emph{Realizations of the four point affine {L}ie algebra
  {$\mathfrak{sl}(2,R)\oplus(\Omega_R/dR)$}}, Pacific J. Math. \textbf{234}
  (2008), no.~2, 261--289.

\bibitem[Cox16a]{cox-center-genus-zero-KN-algebras}
\bysame, \emph{Module structure of the center of the universal central
  extension of a genus zero {K}richever-{N}ovikov algebra}, J. Algebra
  \textbf{467} (2016), 58--79. \MR{3545955}

\bibitem[Cox16b]{MR3487217}
\bysame, \emph{On the universal central extension of hyperelliptic current
  algebras}, Proc. Amer. Math. Soc. \textbf{144} (2016), no.~7, 2825--2835.
  
\bibitem[CI18]{MR3845909} 
Ben Cox and Mee Seong Im, \emph{On the module structure of the center of hyperelliptic Krichever-Novikov algebras}, Representations of Lie algebras, quantum groups and related topics, Contemp. Math., vol. 713, Amer. Math. Soc., Providence, RI, 2018, pp. 61--94. 

\bibitem[CZ17]{coxzhao-2016}
Ben Cox and Kaiming Zhao, \emph{Certain families of polynomials arising in the
  study of hyperelliptic {L}ie algebras}, The Ramanujan Journal, to appear,
  arXiv:1602.01432. (2017).

\bibitem[FdB55]{dB1}
C.~F. Fa\`a~di Bruno, \emph{Sullo sviluppo delle funzione}, Ann. di Scienze
  Matem. et Fisiche di Tortoloni \textbf{6} (1855), 479--480.

\bibitem[FdB57]{dB2}
\bysame, \emph{Note sur un nouvelle formule de calcul diff\'erentiel}, Quart.
  J. Math. \textbf{1} (1857), 359--360.

\bibitem[FH91]{MR1153249}
William Fulton and Joe Harris, \emph{Representation theory}, Graduate Texts in
  Mathematics, vol. 129, Springer-Verlag, New York, 1991, A first course,
  Readings in Mathematics.

\bibitem[Jor86]{MR829385}
D.~A. Jordan, \emph{On the ideals of a {L}ie algebra of derivations}, J. London
  Math. Soc. (2) \textbf{33} (1986), no.~1, 33--39.

\bibitem[Ser77]{MR0450380}
Jean-Pierre Serre, \emph{Linear representations of finite groups},
  Springer-Verlag, New York-Heidelberg, 1977, Translated from the second French
  edition by Leonard L. Scott, Graduate Texts in Mathematics, Vol. 42.

\bibitem[Sha03]{MR2035219}
Tanush Shaska, \emph{Determining the automorphism group of a hyperelliptic
  curve}, Proceedings of the 2003 {I}nternational {S}ymposium on {S}ymbolic and
  {A}lgebraic {C}omputation, ACM, New York, 2003, pp.~248--254.

\bibitem[Skr88]{MR966871}
S.~M. Skryabin, \emph{Regular {L}ie rings of derivations}, Vestnik Moskov.
  Univ. Ser. I Mat. Mekh. (1988), no.~3, 59--62.

\bibitem[Skr04]{MR2035385}
Serge Skryabin, \emph{Degree one cohomology for the {L}ie algebras of
  derivations}, Lobachevskii J. Math. \textbf{14} (2004), 69--107 (electronic).

\bibitem[Tay04]{MR2060030}
Richard Taylor, \emph{Galois representations}, Ann. Fac. Sci. Toulouse Math.
  (6) \textbf{13} (2004), no.~1, 73--119. \MR{2060030 (2005a:11071)}

\bibitem[Ter99]{MR1695775}
Audrey Terras, \emph{Fourier analysis on finite groups and applications},
  London Mathematical Society Student Texts, vol.~43, Cambridge University
  Press, Cambridge, 1999.

\end{thebibliography}

\providecommand{\bysame}{\leavevmode\hbox to3em{\hrulefill}\thinspace}
\providecommand{\MR}{\relax\ifhmode\unskip\space\fi MR }
\providecommand{\MRhref}[2]{%
  \href{http://www.ams.org/mathscinet-getitem?mr=#1}{#2}
}
\providecommand{\href}[2]{#2}

\appendix

\end{document}

\bibitem[Jor86]{MR829385}
D.~A. Jordan.
\newblock On the ideals of a {L}ie algebra of derivations.
\newblock {\em J. London Math. Soc. (2)}, 33(1):33--39, 1986.

\bibitem[Skr88]{MR966871}
S.~M. Skryabin.
\newblock Regular {L}ie rings of derivations.
\newblock {\em Vestnik Moskov. Univ. Ser. I Mat. Mekh.}, (3):59--62, 1988.

\bibitem[Skr04]{MR2035385}
S. M. Skryabin, Degree one cohomology for the {L}ie algebras of
  derivations. {\it Lobachevskii J. Math.}, 14(2004), 69--107 (electronic).

@article {MR3631928,
    AUTHOR = {Cox, Ben and Guo, Xiangqian and Lu, Rencai and Zhao, Kaiming},
     TITLE = {Simple superelliptic {L}ie algebras},
   JOURNAL = {Commun. Contemp. Math.},
  FJOURNAL = {Communications in Contemporary Mathematics},
    VOLUME = {19},
      YEAR = {2017},
    NUMBER = {3},
     PAGES = {1650032, 22},
      ISSN = {0219-1997},
   MRCLASS = {17B65 (14H55 17B40)},
  MRNUMBER = {3631928},
       DOI = {10.1142/S0219199716500322},
       URL = {http://dx.doi.org/10.1142/S0219199716500322},
}

@inproceedings {MR2035219,
    AUTHOR = {Shaska, Tanush},
     TITLE = {Determining the automorphism group of a hyperelliptic curve},
 BOOKTITLE = {Proceedings of the 2003 {I}nternational {S}ymposium on
              {S}ymbolic and {A}lgebraic {C}omputation},
     PAGES = {248--254},
 PUBLISHER = {ACM, New York},
      YEAR = {2003},
   MRCLASS = {14H37 (14Q05)},
  MRNUMBER = {2035219},
MRREVIEWER = {Sadok Kallel},
       DOI = {10.1145/860854.860904},
       URL = {http://dx.doi.org/10.1145/860854.860904},
}
	
@article {MR1223022,
    AUTHOR = {Bujalance, E. and Gamboa, J. M. and Gromadzki, G.},
     TITLE = {The full automorphism groups of hyperelliptic {R}iemann
              surfaces},
   JOURNAL = {Manuscripta Math.},
  FJOURNAL = {Manuscripta Mathematica},
    VOLUME = {79},
      YEAR = {1993},
    NUMBER = {3-4},
     PAGES = {267--282},
      ISSN = {0025-2611},
   MRCLASS = {20H10 (30F10)},
  MRNUMBER = {1223022},
MRREVIEWER = {S. Allen Broughton},
       DOI = {10.1007/BF02568345},
       URL = {http://dx.doi.org/10.1007/BF02568345},
}
		
@incollection {MR3845909,
    AUTHOR = {Cox, Ben and Im, Mee Seong},
     TITLE = {On the module structure of the center of hyperelliptic
              {K}richever-{N}ovikov algebras},
 BOOKTITLE = {Representations of {L}ie algebras, quantum groups and related
              topics},
    SERIES = {Contemp. Math.},
    VOLUME = {713},
     PAGES = {61--94},
 PUBLISHER = {Amer. Math. Soc., Providence, RI},
      YEAR = {2018},
}   MRCLASS = {22E60 (16W20 16W25 22E66 22E99)},
  MRNUMBER = {3845909},
       DOI = {10.1090/conm/713/14312},
       URL = {https://doi.org/10.1090/conm/713/14312},
}

@book {MR1119304,
    AUTHOR = {Coxeter, H. S. M.},
     TITLE = {Regular complex polytopes},
   EDITION = {Second},
 PUBLISHER = {Cambridge University Press, Cambridge},
      YEAR = {1991},
     PAGES = {xiv+210},
}      ISBN = {0-521-39490-2},
   MRCLASS = {51M20 (52B15)},
  MRNUMBER = {1119304},
MRREVIEWER = {P. McMullen},
}